\def\XXint#1#2#3{{\setbox0=\hbox{$#1{#2#3}{\int}$}
\vcenter{\hbox{$#2#3$}}\kern-.5\wd0}}
\newtheorem{theor}{Theorem}
\newtheorem{corollary}{Corollary}
\newtheorem*{theorA}{Theorem A}
\newtheorem*{theorB}{Theorem B}
\newtheorem*{theorC}{Theorem C}
\theoremstyle{remark}
\newtheorem{remark}{Remark}
\numberwithin{equation}{section}
\newcommand{\abs}[1]{\left\lvert#1\right\rvert}
\newcommand{\norm}[2]{{\left\| #1 \right\|}_{#2}}
\newcommand{\bey}{\begin{eqnarray*}}
\newcommand{\eey}{\end{eqnarray*}}
\newcommand{\rn}{{\mathbb R}^n}
\newcommand{\re}{\mathbb R}
\newcommand{\rr}{\mathbb R}
\newcommand{\nn}{\mathbb N}
\newcommand{\N}{\mathbb N}
\newcommand{\hn}{\mathbb H^n}
\newcommand{\p}{\partial}
\newcommand{\cg}{\mathbb{G}}
\newcommand{\Lcm}{\mathcal{L}}
\newcommand{\poly}{\mathcal{P}}
\newcommand{\liea}{\mathfrak{g}}
\begin{document}


\subjclass[2000]{Primary 26D10, 31B10; Secondary 46E35, 35R03}

\keywords{Sub-elliptic Poincar\'e inequalities, multilinear operators, stratified Lie groups, Carnot groups}

\address{Kabe Moen, Department of Mathematics,
University of Alabama, P.O.  Box 870350, Tuscaloosa, AL 35487,
USA.}  \email{kabe.moen@ua.edu}

\address{Virginia Naibo, Department of Mathematics, Kansas State University, 138 Cardwell Hall,
Manhattan, KS 66506, USA.} \email{vnaibo@math.ksu.edu}

\thanks{Partial NSF support under the following grants is acknowledged: DMS 1201504 (first author), DMS 1101327 (second author).}

\title[High-order multilinear Poincar\'e and Sobolev inequalities]{Higher-order multilinear Poincar\'e and Sobolev inequalities in Carnot groups}
\author{Kabe Moen \and Virginia Naibo}
\date{\today}

\begin{abstract}
The notions of higher-order weighted multilinear Poincar\'e and Sobolev inequalities in Carnot groups are introduced. As an application, weighted Leibniz-type rules in Campanato-Morrey spaces are established.
\end{abstract}

\maketitle

\bigskip

\section{Introduction}

The classical Poincar\'e inequality 
\begin{equation}\label{eq:classical}
\left(\int_B |h(x)-h_B|^q\,dx\right)^{1/q}\le C\, \left(\int_B |\nabla h(x)|^p\,dx\right)^{1/p},\quad h\in C^1(B),
\end{equation}
where $B$ is an Euclidean ball in $\rn$ of radius $r(B),$  $h_B=\frac{1}{|B|}\int_Bh(x)\,dx,$ $1< p< n$ and $q=\frac{np}{n-p},$ can be proved as a consequence of the representation formula 
\begin{equation}\label{eq:repclassical}
|h(x)-h_B|\lesssim \int_{B}\frac{|\nabla h(y)|}{|x-y|^{n-1}}\,dy, \quad x\in B,
\end{equation}
and the fact that the fractional integral of order 1  is a bounded operator from $L^p$ into $L^q.$ Representation formulas analogous to \eqref{eq:repclassical} were proved for H\"omander vector fields with balls associated to the corresponding Carnot-Carath\'eodory metric in $\rn$ in works that include \cite{Lu92, FLW95, FLW95b, FLW96,  CDG97, LW98b, HK00}. As a result,  weighted Poincar\'e inequalities of the type 
\begin{equation*}
\left(\frac{1}{u_B}\int_B|(h(x)-a(h,B))|^qu(x)\,dx\right)^{1/q}\le C\, r(B)\, \left(\frac{1}{v_B}\int_B |{\bf{X}} h(x)|^pv(x)\,dx\right)^{1/p}, 
\end{equation*}
follow, where  ${\bf{X}}=\{X_j\}_{j=1}^l$ is a collection of vector fields satisfying H\"omander's condition,  $B$ is a Carnot-Carath\'eodory metric ball in $\rn$ of radius $r(B),$ and $a(h,B)$ is the average of $h$ over $B$ with respect to Lebesgue measure or with respect to the measure $u(x)dx.$  The indices $p,$ $q$ and the weights $u,$ $v$ are related through certain conditions that extend the cases $1<p<Q,$ $q=\frac{Qp}{Q-p}$ (here $Q$ stands for homogeneous dimension) in the unweighted situation $u=v=1.$ In addition to the articles previouly cited, we refer the reader to the seminal works \cite{FKS82, CW85},  regarding weighted Poincar\'e inequalities in the Euclidean setting, and \cite{J86}, for unweighted Poincar\'e inequalities relative to H\"omander vector fields. Other related references include \cite{FGW94, MSal95, GN96,DGPb}. As it is well-known,
 Poincar\'e inequalities, along with Sobolev inequalities (replace $h_B$ by 0), play an important role in the study of local regularity of solutions to partial differential operators associated to vectors fields. An example of such operators is given by  $\sum_{i,j=1}^l X_i^*(a_{i,j}(x) X_j),$ where  $X_j^*$ is the adjoint of $X_j,$ $j=1,\cdots, l,$ and  $\{a_{i,j}\}$ is a symmetric matrix satisfying a suitable degenerate or non-degenerate ellipticity condition.

In the particular setting of Carnot groups, representation formulas in the spirit of  \eqref{eq:repclassical} with  higher order derivatives in the right hand side  and $h_B$ replaced by suitable polynomials have also been extensively studied; see for instance \cite{FS82,CSV92, Lu00, LW00, LW04}. More precisely,
given  a Carnot group $\cg$  of homogeneous dimension $Q$ and Carnot-Carath\'eodory metric $d$ with respect to a family of generators ${\bf{X}}$, it holds that  
\begin{equation*}
\abs{h(x)- P_k(B,h)(x)} \le C\, \int_B\abs{{\bf{X}}^k h(y)}\frac{d(x,y)^k}{\abs{B_d(x,d(x,y))}}\,dy,\quad x\in B,
\end{equation*}
where  $B$ is a $d$-ball, $h\in C^k(B)$, $0<k \le Q,$ $P_k(B,f)$ is a suitable polynomial of degree less than $k$  and  $C$ is independent of $h,$ $x$ and $B$ (consult notation in Section \ref{secc:carnotgroups}). In the Euclidean   case these inequalities read as
\begin{equation*}
\abs{h(x)- P_k(B,h)(x)} \le C\, \int_B\frac{\abs{{\bf{\nabla}}^k h(y)}}{|x-y|^{n-k}}\,dy,\quad x\in B.
\end{equation*}
Boundedness properties of potential operators of order $k$ lead then to higher order weighted Poincar\'e and Sobolev inequalities. 

\bigskip

Bilinear (and multilinear) Poincar\'e  inequalities such as
\begin{align}\label{order1}
\left(\int_B\left(|fg-f_B g_B|u\right)^q\,dx\right)^{1/q} &\lesssim \left(\int_B(|{\bf{X}}f|v_1)^{p_1}
\,dx\right)^{1/p_1}\left(\int_B\left(\abs{g}
v_2\right)^{p_2}\,dx\right)^{1/p_2} \nonumber \\
&+ \left(\int_B(|f|v_1)^{p_1}
\,dx\right)^{1/p_1}\left(\int_B\left(\abs{{\bf{X}}g}
v_2\right)^{p_2}\,dx\right)^{1/p_2},
\end{align}
where ${\bf{X}}$ is a collection of vector fields satisfying H\"ormander's condition, were introduced and studied in \cite{MMN10} in the context of Carnot-Carath\'eodory spaces.  Here $\frac{1}{2}<p\le q<\infty$ with  $\frac{1}{p}=\frac{1}{p_1}+\frac{1}{p_2},$ and the weights satisfy certain Muckenhoupt weights-type conditions involving these indices.
Such inequalities  provide a valid alternative to  inequalities of the type
\begin{equation}\label{Poinpq}
\inf\limits_{a \in \rr} \left( \int_B |(fg)(x) - a|^q \, dx \right)^{1/q} \lesssim \left(\int_B |\nabla(fg)(x)|^p \, dx\right)^{1/p},
\end{equation}
which fail when $0 < p < 1$. Higher-order versions of  \eqref{Poinpq}, for instance,
\begin{equation}\label{Poinpq2}
\inf\limits_{P(x)} \left( \int_B |(fg)(x) - P(x)|^q \, dx \right)^{1/q} \lesssim \left(\int_B |\Delta(fg)(x)|^p \, dx\right)^{1/p},
\end{equation}
where the supremum is taken over all polynomials $P(x)$ of degree less than two, also fail in general for $0<p<1$ (see Remark~\ref{counterexample}). By H\"older's inequality a natural substitute for \eqref{Poinpq2} is given by
\begin{align}\label{Poinpq3}
\inf\limits_{P(x)} \left( \int_B |(fg)(x) - P(x)|^q \, dx \right)^{1/q}& \lesssim
\left(\int_B |\Delta f(x)|^{p_1} \, dx\right)^{1/p_1}
\left(\int_B |g(x)|^{p_2} \, dx\right)^{1/p_2}\nonumber\\
&+\left(\int_B |\nabla f(x)|^{p_1} \, dx\right)^{1/p_1} \left(\int_B |\nabla g(x)|^{p_2} \, dx\right)^{1/p_2}\\
&+ \left(\int_B |f(x)|^{p_1} \, dx\right)^{1/p_1} \left(\int_B |\Delta g(x)|^{p_2} \, dx\right)^{1/p_2},\nonumber
\end{align}
 where $p_1$ and $p_2$ satisfy $\frac{1}{p}=\frac{1}{p_1}+\frac{1}{p_2}.$ As we will see, inequality \eqref{Poinpq3} and weighted versions of it are indeed true for $1/2<p<1.$ 

The aim of this work is to introduce higher-order weighted multilinear Poincar\'e and Sobolev inequalities  in the spirit of \eqref{Poinpq3} in the general context of  Carnot groups  that are  valid even  when $0<p<1.$

 \bigskip
 
This article consists of three additional sections. Section \ref{sec:results} starts with  the necessary background on  Carnot groups and then presents   the statements of the main results of this article  along with some remarks and examples. In Section \ref{secc:proofmain} we  prove our main results, Theorem \ref{high2wthm}, Theorem~\ref{sobolev} and Theorem~\ref{sobolev2}, which follow from two key pieces in the context of Carnot groups: the boundedness of multilinear potential operators (Theorem \ref{multifracint}) and representation formulas for products of functions (Corollaries~\ref{repformulamulti}, \ref{globalrepformulamulti}, \ref{repformulamulti3}).   In Section \ref{secc:leib} we close this article with an application   to weighted Leibniz-type rules for Campanato-Morrey spaces.

\bigskip

{\bf Acknowledgements.}  The authors would like to thank Guozhen Lu for useful discussions regarding the results presented in this article. They also thank the anonymous referees for their comments and suggestions.

\section{Main Results}\label{sec:results}

We start this section by reviewing some preliminaries concerning Carnot groups (Section \ref{secc:carnotgroups}). The main results are presented in Section \ref{secc:mainres} along with some remarks and examples.

\subsection{Carnot groups}\label{secc:carnotgroups}  

We follow the notation in \cite{BLU07} for our exposition and refer the reader to \cite{BLU07, CSV92,FS82} for further details. 

A smooth vector field $X$ on $\rn$ is a $C^\infty$ function $X:\rn\to \rn,$ this is, $X(x)=(a_1(x),\cdots,a_n(x))^T,$ $x\in\rn,$ where $a_i:\rn\to\re,$ $i=1,\cdots,n,$ are infinitely differentiable functions. If $f:\rn\to\re$ is  differentiable, we denote by $Xf$ the function defined by
\[
Xf(x)=X(x)^T\cdot \nabla f(x)=\sum_{j=1}^na_j(x)\,\partial_jf(x),\qquad x\in \rn.
\]

  If ${\bf{X}}=\{X_1,\cdots,X_l\}$ is a family of smooth vector fields in $\rn,$ $f\in C^1(\rn),$ and  $\alpha=(\alpha_1,\cdots,\alpha_l)\in\nn_0^l$ is a multi-index  we define
\[
{\bf{X}}^{\alpha}f:=X_1^{\alpha_1}(\cdots X_{l-1}^{\alpha_{l-1}}(X_{l}^{\alpha_l}f)),
\]
and if $k\in\nn_0$ we set
\[
\abs{{\bf{X}}^kf}:=\left(\sum_{\abs{\alpha}=k} \abs{{\bf{X}}^\alpha f}^2\right)^{1/2}.
\]


Let $(\rn,\diamond)$ be  a Lie group on $\rn$ and denote by $\liea$ its Lie algebra. Consider  $n_1,\cdots,n_s\in \nn,$ $n_1+\cdots+n_s=n,$ and dilations $\{\delta_\lambda\}_{\lambda>0}$ of the form
\[\delta_\lambda(x)=(\lambda \,x^{(1)}, \lambda^2 x^{(2)},\cdots,\lambda^s x^{(s)}),\quad  x^{(i)}\in\re^{n_i}.\]
The triple $\cg=(\rn,\diamond,\delta_\lambda )$ is said to be a  Carnot group (of step $s$ and $n_1$ generators) if  $\delta_\lambda$ is an automorphism of $(\re^n, \diamond)$ for every $\lambda>0$ and if the first $n_1$ elements of the Jacobian basis of $\liea,$ say $Z_1,\cdots,Z_{n_1},$  satisfy
\begin{equation}\label{hormander}
\text{rank}(\text{Lie}[Z_1,\cdots,Z_{n_1}](x))=n,\qquad \text{for all }x\in\rn,
\end{equation}
where $\text{Lie}[Z_1,\cdots,Z_{n_1}]$ is the Lie algebra generated by the vector fields $Z_1,\cdots Z_{n_1}.$

The number $Q=\sum_{i=1}^s i\,n_i$ is called the homogeneous dimension of $\cg.$ The vector fields $Z_1,\cdots,Z_{n_1}$ are called the (Jacobian) generators of $\cg,$ whereas any basis for $\text{span}\{Z_1,\cdots,Z_{n_1}\}$ is called a system of generators of $\cg.$
It  follows that 
\[
\liea=W^{(1)}\oplus \cdots \oplus W^{(s)},
\]
where $W^{(i)}$  denotes the vector space spanned by the commutators of length $i$ of the vectors $Z_1,\cdots, Z_{n_1}.$ The elements in $W^{(i)}$ are $\delta_\lambda$-homogeneous of degree $i,$ and $\dim{W^{(i)}}=n_i$ if $i\le s$ and $W^{(i)}=\{0\}$
if $i\ge s.$ Note that this implies that  $\text{Lie}[Z_1,\cdots,Z_{n_1}]=\liea;$ moreover condition \eqref{hormander} implies that $\{Z_1,\cdots Z_{n_1}\}$ is a family of vector fields satisfying H\"ormander's condition.

We will consider in $\rn$ the Carnot-Carath\'eory metric $d$ associated to a system of generators of $\cg.$ If  $B_d(x,r)$ is the $d$-ball of radius $r$ centered at $x$ then $\abs{B_d(x,r)}=c_d \,r^Q$ where $c_d=\abs{B_d(0,1)}$ (see \cite[p. 248]{BLU07}).
It follows that  $(\rn,d,\text{Lebesgue measure})$ is a space of homogeneous type. 

 The second order differential operator
\[
\mathcal{L}=\sum_{j=1}^{n_1} X_i^2
\]
is called the canonical sub-Laplacian of $\cg$ if $X_i=Z_i,$ $i=1,\cdots,n_1,$ and simply a sub-Laplacian if $\{X_1,\cdots,X_{n_1}\}$ is a system of generators of $\cg.$ We point out that
there are characterizations of  families of smooth vector fields  $\{X_1,\cdots, X_{n_1}\}$ for which there exists a  Carnot group with respect to which $\sum_{i=1}^{n_1} X_i^2$ is a sub-Laplacian (see, for instance, \cite[p.191]{BLU07}).

If $\alpha=(\alpha_1,\cdots,\alpha_n)\in\nn_0^n$ is a multi-index and $x=(x_1,\cdots,x_n)\in\re^n$ we set 
\[x^\alpha=x_1^{\alpha_1}\cdots x_n^{\alpha_n},\quad \abs{\alpha}=\alpha_1+\alpha_2+\cdots+\alpha_n,\quad \abs{\alpha}_\cg=\sigma_1\alpha_1+\sigma_2\alpha_2+\cdots+\sigma_n\alpha_n,\]
where $\sigma_i=1$ for  $i=1,\cdots,n_1,$ $\sigma_i=2$ for $i=n_1+1,\cdots,n_1+n_2,$  $\sigma_i=3$ for $i=n_1+n_2+1,\cdots, n_1+n_2+n_3,$ and so on.
If $P(x)=\sum_{\alpha} c_\alpha x^\alpha$ is a polynomial on $\cg$ the homogeneous degree (or just degree) of $P$ is defined  as
$\text{deg}_\cg(P)=\max\{\abs{\alpha}_\cg: c_\alpha\neq 0\}.$

\subsection{Main results}\label{secc:mainres} We are now ready to present the  main results in this article.

\begin{theor}[Higher-order weighted multilinear Poincar\'e inequality]\label{high2wthm} Suppose $m \in \N,$ $\frac{1}{m}<p\le q<\infty$ and $1<p_1, \cdots,
 p_m <\infty$ such that
 $ \frac{1}{p}=\frac{1}{p_1}+\cdots+\frac{1}{p_m}.$
Let $\cg$ be a  Carnot group in $\rn$ of homogeneous dimension $Q$ and $n_1$ generators, $k$ and $m$  positive integers such that $k\le m\, Q$, $d$ the Carnot-Carath\'eodory metric in $\rn$ with respect to a family of generators ${\bf{X}}$ of $\cg.$
Let  $u, \, v_i,$ $i=1,\cdots, m,$  be
weights defined on $\re^n$
and satisfying condition \eqref{2wq>1} if $q>1$ or condition
\eqref{2wq<1} if $q\le 1,$ where

\begin{equation}\label{2wq>1}
\sup_{B \, d\text{-ball}}
r(B)^{k+Q(1/q - 1/p)} \left( \frac{1}{r(B)^Q}\int_B
u^{qt} dx \right)^{1/q t} \prod_{i=1}^m \left(
\frac{1}{r(B)^Q}\int_B v_i^{-t p_i'} dx \right)^{1/t p_i'} <
\infty,
\end{equation}
for some $t> 1,$

\medskip

\begin{equation}\label{2wq<1}
\sup_{B\, d\text{-ball}}
r(B)^{k+Q(1/q - 1/p)} \left( \frac{1}{r(B)^Q}\int_B
u^{q} dx \right)^{1/q} \prod_{i=1}^m \left( \frac{1}{r(B)^Q}\int_B
v_i^{-t p_i'} dx \right)^{1/t p_i'} < \infty,
\end{equation}
for some $t > 1$, with $r(B)$ the radius  of $B$.

Then  for all $d$-ball $B$ and all $\vec{f}=(f_1,\cdots,f_m)\in (C^k(\overline{B}))^m,$ there exists a polynomial $P_k(B,\vec{f})$ of degree less than $k$ such that the following weighted $m$-linear subelliptic Poincar\'e inequality holds true
\begin{align}\label{high2w}
&\left(\int_B\left(\abs{\prod_{i=1}^m f_i-P_k(B,\vec{f})}u\right)^q\,dx\right)^{1/q}&\le C\,
\sum_{\substack{\alpha_i\in \N_0^{n_1} \\ \abs{\alpha_1}+\cdots+\abs{\alpha_m}=k}}
\prod_{i=1}^m\left(\int_B\left(\abs{{\bf{X}}^{\alpha_i}f_i}
v_i\right)^{p_i}\,dx\right)^{1/p_i},
\end{align}
where $C$ is a constant independent of $\vec{f}$ and $B$  and $\bf{X}^\alpha$ for a multiindex $\alpha$ is as defined in Section \ref{secc:carnotgroups}.
\end{theor}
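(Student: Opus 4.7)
My plan is to reduce the theorem to a two-weight estimate for a localized multilinear Riesz potential, by means of two standard reductions (a higher-order Taylor representation, then a Leibniz rule). Set $F := \prod_{i=1}^m f_i$ and take $P_k(B,\vec f)$ to be a group Taylor polynomial of $F$ of degree less than $k$ adapted to the Carnot structure (for instance, the one with coefficients given by averaged ${\bf X}$-derivatives of $F$ over $B$, in the spirit of Folland--Stein or Lu--Wheeden). The higher-order sub-elliptic representation theorem on Carnot groups then gives, for every $x\in B$,
\begin{equation*}
\bigl|F(x)-P_k(B,\vec f)(x)\bigr|\lesssim \int_B\frac{\sum_{|\alpha|=k}|{\bf X}^\alpha F(y)|}{d(x,y)^{Q-k}}\,dy,
\end{equation*}
which is the workhorse step where the condition $k\le mQ$ enters (guaranteeing the right-hand side behaves like a fractional integral of order $k$).

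The Leibniz rule $X_i(fg)=(X_if)g+f(X_ig)$, iterated without reordering the first-layer fields ${\bf X}=\{X_1,\ldots,X_{n_1}\}$, distributes each of the $k$ derivatives among the $m$ factors and yields the pointwise bound
\begin{equation*}
\sum_{|\alpha|=k}|{\bf X}^\alpha F(y)|\lesssim \sum_{\substack{\alpha_i\in\N_0^{n_1}\\ |\alpha_1|+\cdots+|\alpha_m|=k}}\prod_{i=1}^m|{\bf X}^{\alpha_i}f_i(y)|.
\end{equation*}
Combining the two displays above, the proof of \eqref{high2w} reduces (after summing over the finitely many admissible multi-indices $\vec\alpha$) to the localized two-weight multilinear Riesz potential inequality
\begin{equation*}
\left(\int_B\left(u(x)\int_B\frac{\prod_{i=1}^m g_i(y)}{d(x,y)^{Q-k}}\,dy\right)^q dx\right)^{1/q}\lesssim \prod_{i=1}^m\left(\int_B(g_iv_i)^{p_i}\,dx\right)^{1/p_i},
\end{equation*}
with $g_i=|{\bf X}^{\alpha_i}f_i|$, under hypothesis \eqref{2wq>1} if $q>1$ and \eqref{2wq<1} if $q\le 1$.

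For $q>1$ I would prove this inequality by duality against $h\in L^{q'}(B)$: after applying Fubini, H\"older in $y$ with exponents $(qt)',tp_1',\ldots,tp_m'$ factorizes the integrand into a purely geometric piece controlled by $\int_B d(x,y)^{k-Q}\,dy\lesssim r(B)^k$ and $1/t$-th powers of $u^{qt}$ and $v_i^{-tp_i'}$; the scaling exponent $k+Q(1/q-1/p)$ appearing in \eqref{2wq>1} is then precisely what is needed to absorb $r(B)^k$ against the weight averages and close the estimate. The main obstacle is the case $q\le 1$, where duality fails and the $L^q$ quasi-norm interacts only subadditively with sums. Here I would employ a Hedberg-type dyadic decomposition of the inner integral at scales $d(x,y)\sim 2^{-j}r(B)$, apply H\"older in $y$ on each annulus, and close the estimate via a geometric summation in $j$ using \eqref{2wq<1} (whose weaker unweighted $L^q$-average condition on $u$ is exactly the feature needed when duality is unavailable). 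This off-diagonal multilinear fractional integral estimate in the regime $q\le 1$ is the core technical step on which the whole theorem hinges.
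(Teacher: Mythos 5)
There is a genuine gap, and it lies in your very first reduction. By applying the \emph{linear} representation formula to $F=\prod_i f_i$ on $\cg$ itself and then distributing derivatives with the Leibniz rule, you collapse all $m$ integration variables to a single $y$ and arrive at the ``diagonal'' operator $T(\vec g)(x)=\int_B d(x,y)^{k-Q}\prod_{i}g_i(y)\,dy$. The intermediate two-weight inequality you claim for $T$ is false for general nonnegative $g_i$ precisely in the regime $p<1$ that the theorem is designed to reach. Concretely, take $u=v_i\equiv 1$ and $1/q=1/p-k/Q$ (then \eqref{2wq>1}, resp.\ \eqref{2wq<1}, holds with supremum equal to $1$), and $g_i(y)=d(y,0)^{-Q/p_i+\epsilon}\chi_B(y)$ with $\epsilon$ small: each $g_i\in L^{p_i}(B)$, but $\prod_i g_i\sim d(y,0)^{-Q/p+m\epsilon}$ fails to be locally integrable when $p<1$, so $T(\vec g)\equiv+\infty$ on $B$ while the right-hand side is finite. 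Since your duality argument (for $q>1$) and Hedberg decomposition (for $q\le 1$) are aimed at proving exactly this false inequality, the proof cannot close; nor is there an evident way to exploit the boundedness of the particular inputs $\abs{{\bf{X}}^{\alpha_i}f_i}$ while keeping the constant independent of $\vec f$ and $B$. A secondary problem: the single-integral form of the linear representation formula you invoke requires $k\le Q$, not $k\le mQ$, so your first display is unavailable for $Q<k\le mQ$.

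The conditions \eqref{2wq>1}--\eqref{2wq<1} are calibrated to the genuinely multilinear potential $\mathcal{I}_{\cg,k}(\vec g)(x)=\int_{B^m}d(x,\vec y)^{k-mQ}\prod_i g_i(y_i)\,d\vec y$, which keeps the variables $y_1,\dots,y_m$ separate; no pointwise domination of $T$ by $\mathcal{I}_{\cg,k}$ holds (concentrated bumps show $T$ can exceed $\mathcal{I}_{\cg,k}$ by a factor $(r/\epsilon)^{(m-1)Q}$). The paper's route is to perform the representation step on the $m$-fold product Carnot group $\cg^{(m)}\subset\re^{nm}$, whose homogeneous dimension is $mQ$ --- this is where the hypothesis $k\le mQ$ is the correct one: apply Theorem A to $f(\vec y)=\prod_i f_i(y_i)$ on the ball $B^m$, where the lifted derivatives $\tilde{{\bf{X}}}^k$ automatically distribute among the factors because they act on separate variables (no Leibniz rule needed), and only then restrict to the diagonal in $x$. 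This yields exactly $\mathcal{I}_{\cg,k}\bigl(\abs{{\bf{X}}^{\alpha_1}f_1}\chi_B,\dots,\abs{{\bf{X}}^{\alpha_m}f_m}\chi_B\bigr)$, and the theorem follows from the two-weight bounds for multilinear fractional integrals on spaces of homogeneous type (Theorem~\ref{multifracint}, i.e.\ \cite[Corollary 1]{MMN10}), which already cover both $q>1$ and $q\le 1$. To repair your argument you must keep the $m$ variables separate from the start.
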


In the linear case $(m=1)$, representation formulas and Poincar\'e inequalities imply embedding theorems on Campanato-Morrey spaces.  These embeddings are applicable when studying the regularity of solutions to  partial differential equations; see, for instance, Lu \cite{Lu95, Lu98} where such embeddings were proven in the Carnot-Carath\'eodory context. The multilinear analogs of these embeddings  come in the form of Leibniz-type rules. We next illustrate this by focusing on the bilinear case $m=2$.
Indeed, the fractional Leibniz rule states that for $\alpha > 0$ and $1 < p_1, p_2,  q_1, q_2, r <
\infty,$ with $\frac{1}{r} = \frac{1}{p_1} + \frac{1}{q_1} = \frac{1}{p_2} + \frac{1}{q_2}$, the inequality
\begin{equation}\label{fracleibniz}
\|\abs{\nabla}^\alpha (fg)\|_{L^r}\lesssim \|\abs{\nabla}^\alpha f \|_{L^{p_1}}\|g\|_{L^{q_1}}+\|f\|_{L^{p_2}}\|\abs{\nabla}^\alpha g\|_{L^{q_2}}
\end{equation}
holds true, where $\widehat{\abs{\nabla}^\alpha h}(\xi)=\abs{\xi}^\alpha \hat{h}(\xi)$. Since the pioneering work by Christ-Weinstein \cite{CWein91}
and Kenig-Ponce-Vega \cite{KPVe93} on the Korteweg-de Vries equation and Kato-Ponce \cite{KaPo88} on the
Navier-Stokes equation, inequality \eqref{fracleibniz} has emerged
as an essential tool to study nonlinear PDEs.  In particular, PDEs whose nonlinear terms involve quadratic expressions,
or, more generally, powers of the solution or its derivatives, and products of the
solutions and their derivatives. The use of \eqref{fracleibniz} and closely related inequalities has vastly
spread across the literature in Analysis and PDEs. Notice that inequality
\eqref{fracleibniz} is a particular case of
\begin{equation}\label{fracleibniz2}
\|fg\|_{{\mathcal{Z}}} \lesssim \|f\|_{{\mathcal{X}}_1} \|g\|_{{\mathcal{Y}}_1}
+ \|f\|_{{\mathcal{X}}_2} \|g\|_{{\mathcal{Y}}_2}
\end{equation}
where the spaces ${\mathcal{Z}}= \dot{W}^{\alpha,r},$ ${\mathcal{X}}_1=\dot{W}^{\alpha,p_1}$, ${\mathcal{Y}}_1=L^{q_1}$ , ${\mathcal{X}}_2 = L^{p_2}$ , and ${\mathcal{Y}}_2=\dot{W}^{\alpha,q_2}$
belong to the
homogeneous Sobolev scale. As an application of Theorem~\ref{high2wthm} we derive inequalities of type \eqref{fracleibniz2} in the scale of weighted
 Campanato-Morrey spaces in Carnot groups (see Theorem~\ref{LeibnizCampanato}). We remark that these estimates are new even in the Euclidean setting.

\bigskip

When $k=1$ in Theorem~\ref{high2wthm} one has $P(B,\vec{f})\equiv\prod_{i=1}^m {f_i}_{B}$. Since constants on the right hand side of \eqref{high2w} are independent of $B$ , by taking $\abs{B}\to \infty,$ we obtain the following first-order weighted  Sobolev inequality
\begin{align*}
&\left(\int_{\rn}\left(\prod_{i=1}^m \abs{f_i}u\right)^q\,dx\right)^{1/q}\\&\le C\,
\sum_{i=1}^m\left(\int_{\rn}\left(\abs{{\bf{X}} f_i}
v_i\right)^{p_i}\,dx\right)^{1/p_i}\prod_{j\neq
i}\left(\int_{\rn}\left(\abs{f_j}
v_j\right)^{p_j}\,dx\right)^{1/p_j}, \nonumber
\end{align*}
for $f_i\in C^k_c(\re^n).$
More generally, as proved in  \cite{LW04} (see also \cite{Lu00}), the polynomials in Theorem~\ref{high2wthm} can be taken so that a limit argument gives the following higher-order weighted multilinear Sobolev inequalities.

\begin{theor}[Higher-order weighted multilinear Sobolev inequalities]\label{sobolev}
Under the same hypothesis as Theorem~\ref{high2wthm} and for $f_i\in C^k_c(\re^n)$, $i=1,\cdots,m,$
\begin{align}\label{high2wSobolev}
&\left(\int_{\rn}\left(\prod_{i=1}^m \abs{f_i}u\right)^q\,dx\right)^{1/q}&\le C\,
\sum_{\substack{\alpha_i\in \N_0^{n_1} \\ \abs{\alpha_1}+\cdots+\abs{\alpha_m}=k}}
\prod_{i=1}^m\left(\int_{\re^n}\left(\abs{{\bf{X}}^{\alpha_i}f_i}
v_i\right)^{p_i}\,dx\right)^{1/p_i},
\end{align}
where $C$ is a constant independent of $\vec{f}$ and $\bf{X}^\alpha$ for a multiindex $\alpha$ is as defined in Section \ref{secc:carnotgroups}.
\end{theor}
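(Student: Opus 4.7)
The plan is to deduce \eqref{high2wSobolev} from Theorem~\ref{high2wthm} via an exhausting limit argument on $d$-balls of growing radius. Fix $\vec{f}=(f_1,\dots,f_m)$ with $f_i\in C^k_c(\rn)$ and let $B_R$ be a $d$-ball centered at the origin of radius $R$. For every $R$ large enough that $B_R\supset\bigcup_{i=1}^m\supp(f_i)$, each integral on the right-hand side of \eqref{high2w} with $B=B_R$ coincides with the corresponding integral over $\rn$, and is therefore independent of $R$; denote this common quantity by $\mathcal{R}(\vec{f})$.

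The main difficulty is that Theorem~\ref{high2wthm} controls only $\prod_{i=1}^m f_i-P_k(B_R,\vec{f})$ on $B_R$, and the polynomial correction must be chosen carefully so as to disappear in the limit. Following the strategy of Lu-Wheeden \cite{LW04} and Lu \cite{Lu00}, I would pick $P_k(B_R,\vec{f})$ as a specific polynomial projection whose coefficients are given by suitably weighted averages over $B_R$ of products involving $\vec{f}$ and its ${\bf X}$-derivatives of order less than $k$. Because each $f_i$ has compact support while $|B_R|\to\infty$, these averages decay to zero, so $P_k(B_R,\vec{f})(x)\to 0$ as $R\to\infty$ for every fixed $x\in\rn$.

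With this construction in hand, fix a compact set $K\subset\rn$ and choose $R$ so large that $K\cup\bigcup_{i=1}^m\supp(f_i)\subset B_R$. Restricting the domain of integration on the left-hand side of \eqref{high2w} from $B_R$ to $K$ yields
\begin{equation*}
\left(\int_K\abs{\prod_{i=1}^m f_i-P_k(B_R,\vec{f})}^q u^q\,dx\right)^{1/q}\le C\,\mathcal{R}(\vec{f}).
\end{equation*}
Letting $R\to\infty$ and invoking Fatou's lemma together with the pointwise vanishing of $P_k(B_R,\vec{f})$ gives, with $C$ independent of $K$,
\begin{equation*}
\left(\int_K\abs{\prod_{i=1}^m f_i}^q u^q\,dx\right)^{1/q}\le C\,\mathcal{R}(\vec{f}).
\end{equation*}
Monotone convergence as $K\nearrow\rn$ then produces \eqref{high2wSobolev}.

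The main obstacle is the construction of the polynomial projection $P_k(B_R,\vec{f})$ with the required decay as $R\to\infty$; this is precisely the role played by the polynomial projections in \cite{LW04,Lu00}. Once this ingredient is in place, the remaining limit passage is routine and all the substantive analytic content is carried by Theorem~\ref{high2wthm}.
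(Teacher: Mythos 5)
Your proposal is correct in outline, but it follows the route the paper only sketches in its introduction, not the route of the paper's actual proof. The paper derives Theorem~\ref{sobolev} with no limiting argument at all: it invokes the \emph{global} linear representation formula of Lu--Wheeden (Theorem {\bf B}, \cite[Theorem 3.1]{LW04}) on the product group $\cg^{(m)}$ applied to the compactly supported function $f(\vec{y})=\prod_i f_i(y_i)$, restricts to the diagonal to get Corollary~\ref{globalrepformulamulti} (a pointwise bound by $\mathcal{I}_{\cg,k}$ with no polynomial and no ball), and then applies the global weighted bound for $\mathcal{I}_{\cg,k}$ from Theorem~\ref{multifracint}. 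In other words, the paper pushes the ``polynomial disappears in the limit'' step down to the level of the linear representation formula, where it is already done in \cite{LW04}, rather than performing it on the integrated Poincar\'e inequality. Your version instead takes the limit $R\to\infty$ in \eqref{high2w} itself, which forces you to (a) rerun the proof of Theorem~\ref{high2wthm} with the specific \cite{LW04} polynomial rather than the \cite{LW00} one used in Corollary~\ref{repformulamulti}, and (b) verify that the diagonal restriction of $P_k(B_R^m,f)$ tends to zero pointwise; both points are true (the local representation formulas in \cite{LW00} and \cite{LW04} have the same form, and pointwise decay of $P_k(B_R^m,f)$ on $\re^{nm}$ trivially passes to the diagonal), but you leave them as cited obstacles rather than discharging them, whereas the paper's route makes them unnecessary. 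Your Fatou-plus-monotone-convergence bookkeeping is fine. The trade-off: your argument reuses Theorem~\ref{high2wthm} as a black box and makes the Sobolev inequality a genuine corollary of the Poincar\'e inequality; the paper's argument is shorter and cleaner because the global representation formula already carries all the limiting content.
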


When $k=2,$ a representation formula in terms of the fundamental solution of a sub-Laplacian on a  Carnot group and the boundedness properties of the multilinear fractional integrals give the following  Sobolev inequality.

\begin{theor}[Second-order weighted multilinear Sobolev inequalities with sub-Laplacians]\label{sobolev2}
Under the same hypothesis as Theorem~\ref{high2wthm} (k=2) and for $f_i\in C^2_c(\re^n)$, $i=1,\cdots,m,$
\begin{align*}
&\left(\int_{\rn}\left(\prod_{i=1}^m \abs{f_i}u\right)^q\,dx\right)^{1/q}\\&\le C\,
\sum_{i=1}^m\left(\int_{\rn}\left(\abs{\mathcal{L} f_i}
v_i\right)^{p_i}\,dx\right)^{1/p_i}\prod_{j\neq
i}\left(\int_{\rn}\left(\abs{f_j}
v_j\right)^{p_j}\,dx\right)^{1/p_j}, \nonumber
\end{align*}
where  $\mathcal{L}$ is the sub-Laplacian associated to ${\bf{X}}$ and $C$ is a constant independent of $\vec{f}.$
\end{theor}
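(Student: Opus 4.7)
The plan is to combine Folland's representation formula for $C^2_c(\rn)$ functions in terms of $\mathcal{L}f$ with a weighted two-weight estimate for the Riesz potential of order two on the Carnot group.

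\textbf{Representation and pointwise multilinear reduction.} The sub-Laplacian $\mathcal{L}=\sum_{k=1}^{n_1}{\bf X}_k^2$ admits a fundamental solution $\Gamma$ that is smooth away from the identity and homogeneous of degree $2-Q,$ so that every $f\in C^2_c(\rn)$ satisfies
\[
 f(x) = -\int_{\rn}\Gamma(y^{-1}\circ x)\,\mathcal{L}f(y)\,dy,
\]
together with the pointwise bound $|\Gamma(y^{-1}\circ x)|\lesssim d(x,y)^{2-Q}.$ Consequently
\[
 |f(x)| \lesssim \int_{\rn}\frac{|\mathcal{L}f(y)|}{d(x,y)^{Q-2}}\,dy =: \mathcal{I}_2(|\mathcal{L}f|)(x),
\]
where $\mathcal{I}_2$ is the Riesz potential of order $2$ on $(\rn,d).$ Applying this to the single factor $f_i$ while keeping the others untouched, then averaging over $i\in\{1,\dots,m\},$ yields the pointwise master estimate
\[
 \prod_{j=1}^{m}|f_j(x)| \lesssim \sum_{i=1}^{m}\mathcal{I}_2(|\mathcal{L}f_i|)(x)\prod_{j\ne i}|f_j(x)|,
\]
which already displays the exact combinatorial structure of the target right-hand side.

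\textbf{H\"older splitting and the linear two-weight problem.} It suffices to bound, for each $i,$
\[
 \Big\|u\,\mathcal{I}_2(|\mathcal{L}f_i|)\prod_{j\ne i}|f_j|\Big\|_{L^q(\rn)} \lesssim \|v_i\,\mathcal{L}f_i\|_{L^{p_i}}\prod_{j\ne i}\|v_j f_j\|_{L^{p_j}}.
\]
Writing $u=U_i\prod_{j\ne i}v_j$ with $U_i:=u\prod_{j\ne i}v_j^{-1},$ and applying H\"older's inequality with the exponent decomposition $\tfrac{1}{q}=\tfrac{1}{s_i}+\sum_{j\ne i}\tfrac{1}{p_j}$ (so that $\tfrac{1}{s_i}=\tfrac{1}{q}-\tfrac{1}{p}+\tfrac{1}{p_i}$), the task reduces to the linear two-weight Sobolev inequality
\[
 \|U_i\,\mathcal{I}_2 g\|_{L^{s_i}(\rn)} \lesssim \|v_i\,g\|_{L^{p_i}(\rn)}, \qquad g=|\mathcal{L}f_i|.
\]

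\textbf{Two-weight fractional integral estimate and the main obstacle.} With $k=2$ the scaling factor $2+Q(\tfrac{1}{q}-\tfrac{1}{p})$ in \eqref{2wq>1}/\eqref{2wq<1} equals $2+\tfrac{Q}{s_i}-\tfrac{Q}{p_i},$ which is the correct exponent for the Sawyer--Muckenhoupt-type hypothesis required for $(U_i,v_i).$ A further H\"older argument applied to the local ball averages of $U_i^{s_i}=u^{s_i}\prod_{j\ne i}v_j^{-s_i}$ converts the multilinear product of weighted averages in \eqref{2wq>1}/\eqref{2wq<1} into the two-weight fractional bump condition on the pair $(U_i,v_i),$ from which the required linear inequality follows by the known weighted Riesz-potential theorem on a Carnot group, valid in both the $q>1$ and the more delicate $q\le 1$ regimes. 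The representation formula, pointwise reduction, and H\"older splitting being essentially bookkeeping, the substantial work is concentrated in this last step: repackaging the multilinear weight hypothesis into a valid linear two-weight Muckenhoupt condition on $(U_i,v_i),$ and invoking the appropriate weighted fractional integral estimate on the Carnot group uniformly in the admissible range of $(p,q).$
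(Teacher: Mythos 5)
There is a genuine gap: the reduction to a linear two-weight problem cannot work under the stated hypotheses. First, the exponent $s_i$ defined by $\frac{1}{q}=\frac{1}{s_i}+\sum_{j\ne i}\frac{1}{p_j}$ need not be positive. Take the unweighted case $u=v_1=v_2=1$ on the Heisenberg group ($Q=4$), $m=2$, $k=2$, $p_1=p_2=3$, so that \eqref{2wq>1} forces $\frac1q=\frac1p-\frac12=\frac16$; then $\frac{1}{s_1}=\frac16-\frac13=-\frac16<0$ and your H\"older splitting is impossible, even though this is a perfectly admissible instance of the theorem. Second, even when $s_i>0$, the ``further H\"older argument'' you invoke to pass from \eqref{2wq>1}--\eqref{2wq<1} to a two-weight bump condition on $(U_i,v_i)$ runs in the wrong direction: to dominate
\begin{equation*}
\Bigl(\tfrac{1}{|B|}\textstyle\int_B U_i^{s_it}\,dx\Bigr)^{1/(s_it)}\quad\text{by}\quad\Bigl(\tfrac{1}{|B|}\textstyle\int_B u^{qt}\,dx\Bigr)^{1/(qt)}\prod_{j\ne i}\Bigl(\tfrac{1}{|B|}\textstyle\int_B v_j^{-tp_j'}\,dx\Bigr)^{1/(tp_j')}
\end{equation*}
via H\"older on the normalized measure one needs $\frac{1}{s_i}\ge\frac1q+\sum_{j\ne i}\frac{1}{p_j'}$, whereas $\frac1q+\sum_{j\ne i}\frac{1}{p_j'}=\frac{1}{s_i}+(m-1)$, so the required inequality fails for every $m\ge 2$. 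This is not an artifact of the computation: the weight class defined by \eqref{2wq>1}--\eqref{2wq<1} is strictly larger than anything obtainable by iterating linear results and H\"older-type inequalities (see the second Remark in the Introduction, \cite[p.~10]{MMN10} and \cite[Remark 7.5]{Moen09}), so no factorization into linear two-weight pieces can prove the theorem as stated. A third, smaller, issue is that your use of the fundamental solution of $\mathcal L$ on $\cg$ itself requires $Q>2$, while the theorem only needs $mQ>2$.

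The paper avoids all of this by keeping the operator genuinely multilinear. It lifts to the product group $\cg^{(m)}$ of homogeneous dimension $mQ$, applies the representation formula of Theorem C to the single function $f_1\cdots f_m$ using the fundamental solution of the product sub-Laplacian $\tilde{\mathcal L}$, whose kernel is comparable to $d(x,\vec y)^{2-mQ}$ with $d(x,\vec y)=\sum_i d(x,y_i)$ coupling all $m$ variables, uses $\tilde{\mathcal L}(f_1\cdots f_m)(\vec y)=\sum_i\mathcal Lf_i(y_i)\prod_{j\ne i}f_j(y_j)$, and restricts to the diagonal to obtain \eqref{repformulamulti4} of Corollary~\ref{repformulamulti3}. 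The conclusion then follows from Theorem~\ref{multifracint}, the weighted boundedness of the $m$-linear fractional integral $\mathcal I_{\cg,2}$, which is proved directly from \cite[Corollary 1]{MMN10} under exactly the hypotheses \eqref{2wq>1}--\eqref{2wq<1}. Your pointwise bound $\prod_j|f_j(x)|\lesssim\sum_i\mathcal I_2(|\mathcal Lf_i|)(x)\prod_{j\ne i}|f_j(x)|$ is correct when $Q>2$, but it discards the multilinear structure at the very first step and leads to a strictly weaker result; the functions $f_j$, $j\ne i$, must be kept inside the multilinear potential, not evaluated at $x$.
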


\begin{remark}We point out that Theorem \ref{high2wthm}, as well as the notions of higher-order weighted multilinear Poincar\'e and Sobolev inequalities, are new even in the Euclidean setting. On the other hand,  the Euclidean case of Theorem~\ref{sobolev} for $k=1$ and Theorem~\ref{sobolev2} were proved in \cite{Moen09}.
\end{remark}
\begin{remark} The multilinear techniques implemented in the proof of Theorem \ref{high2wthm} allow for a class of weights strictly larger than the one obtained by iteration of the linear results and H\"older-type inequalities, see  \cite[p. 10]{MMN10} and \cite[Remark 7.5]{Moen09}.
\end{remark}
\begin{remark}\label{counterexample} We present here an example that shows that inequalities of the type \eqref{Poinpq2} fail in general for $0<p<1.$ The ideas are  inspired by the example given in \cite{BK94} that proves that \eqref{Poinpq} may fail for $0<p<1.$

We first consider the one dimensional Euclidean case. Let $\varphi:[-1, 1]\to [0,1]$ be a continuously differentiable function such that $\varphi(-1)=0,$ $\varphi(1)=1,$ $\varphi'(-1)=\varphi'(1)=0$ and $\int_{-1}^1\varphi(x)\,dx=1.$ Set
\[
\psi(x):=\left\{\begin{array}{ll}
  0 & \text{ for } x\le -1,  \\
  \int_{-1}^x\varphi(t)\,dt & \text{ for } -1\le x\le 1, \\
  x & \text{ for } x\ge 1, 
\end{array}\right.
\]
 For each $\varepsilon\in (0,\frac{1}{2}),$ define $f_\varepsilon(x)=\varepsilon \,\psi(\frac{x}{\varepsilon})$ for $x\in\re.$
It follows that 
\[
\lim_{\varepsilon\to 0}\int_{-1}^1\abs{f_\varepsilon''(x)}^p\,dx=0, \qquad 0<p<1.
\]
On the other hand, if $0< q<\infty$ there exists a constant $C_q>0$ such that 
\[
\inf_{a,\,b\in\re} \int_{-1}^1\abs{f_\varepsilon(x)-(ax+b)}^q\,dx\ge C_q, \qquad \text{ for  }  \varepsilon \in (0,1/2).
\]
Indeed, if $0< q<\infty$ and $0<\varepsilon<1/2$ then
\begin{align*}
\inf_{a,\,b\in\re} \int_{-1}^1\abs{f_\varepsilon(x)-(ax+b)}^q\,dx & \ge C\, \inf_{a,\,b\in\re} \int_{1/2}^1\left(\abs{ax-b}+\abs{(1-a)x-b}\right)^q\,dx\\
& \ge C\, \inf_{b\in\re}\int_{1/2}^1\abs{x-2b}^q\, dx.
\end{align*}
Elementary computations show that $\int_{1/2}^1\abs{x-2b}^q\, dx$ is bounded from below by a positive constant independent of $b.$
For higher dimensions consider $g_\varepsilon(x_1,\cdots,x_n):=f_\varepsilon(x_1)$ where $f_\varepsilon$ is as above and integrate, say, in a ball centered at the origin and radius one.

\end{remark}

We conclude this section with  some examples of the result of  Theorem \ref{sobolev2} in the setting of  the Heisenberg group $\hn$.   Recall that $\hn=(\rr^{2n+1},\diamond, \delta_\lambda)$ where 
\[(x,y,s)\diamond(x',y',s')=(x+x',y+y',s+s'+2\,y\cdot x'- 2\, x\cdot y'),\quad x,y,x',y'\in\rn,\,s,s'\in\re,\]
 and 
\[
\delta_{\lambda}:\re^{2n+1}\to \re^{2n+1},\quad \delta_\lambda(x,y,s)=(\lambda x, \lambda y, \lambda^2 s).
\]
The homogeneous dimension of $\hn$ is then $Q=2n+2$ and  the Jacobian generators are 
$$X_i=\p_{x_i}+2y_i\p_s, \quad Y_i=\p_{y_i}-2x_i\p_s, \qquad i=1,\ldots, n.$$
The canonical    sub-Laplacian on $\hn$ (Kohn Laplacian) is given by 
\begin{align*}
\Delta_{\hn}=\sum_{j=1}^n X_j^2+Y_j^2
=\sum_{j=1}^n\partial_{x_j}^2+\partial_{y_j}^2+4\left(\sum_{j=1}^nx_j^2+y_j^2\right)\partial_s^2+4\left(\sum_{j=1}^ny_j \partial_{x_j}-x_j\partial_{y_j}\right)\partial_s.
\end{align*}
Set $z=(x,y,s)\in \hn$ and let
$$|z|_{\hn}=|(x,y,s)|_{\hn}=\sqrt[4]{(|x|^2+|y|^2)^2+s^2}.$$
The weights $u(z)=|z|_{\hn}^{-2}$, $v_1=v_2=1$ satisfy the condition \eqref{2wq<1} with $p=q=1$ and $p_1=p_2=2$ and any $t>1$.   
By Theorem~\ref{sobolev2} it follows that
\begin{multline*} \int_{\re^{2n+1}}\frac{|f(z)g(z)|}{|z|_{\hn}^2}\,dz \lesssim \left(\int_{\re^{2n+1}} |\Delta_{\hn} f(z)|^2\,dz\right)^{1/2} \left(\int_{\re^{2n+1}} |g(z)|^2\,dz\right)^{1/2} \\
+\left(\int_{\re^{2n+1}} |f(z)|^2\,dz\right)^{1/2} \left(\int_{\re^{2n+1}} |\Delta_{\hn} g(z)|^2\,dz\right)^{1/2}.
\end{multline*}
Another example with $p=q=1$ and $p_1=p_2=2$ is given by taking  $u$ to be an $A_\infty$ weight  in $\hn$ and $v_1(x)^2=v_2(x)^2=u(x)^{-1}$.  Condition \eqref{2wq<1} is then satisfied  (with $t>1$ coming from the reversed H\"older inequality satisfied by $u$) if
\begin{equation}\label{weightcond} \sup_{B {\rm{ \, ball\, in\, }} \hn }\frac{1}{r(B)^{Q-1}}\int_B u(z)\,dz<\infty.\end{equation}
For such $u,$ Theorem \ref{sobolev2} gives
\begin{multline*} \int_{\re^{2n+1}}|f(z)g(z)|u(z)\,dz \lesssim \left(\int_{\re^{2n+1}} \frac{|\Delta_{\hn} f(z)|^2}{u(z)}\,dz\right)^{1/2} \left(\int_{\re^{2n+1}} \frac{|g(z)|^2}{u(z)}\,dz\right)^{1/2} \\
+\left(\int_{\re^{2n+1}} \frac{|f(z)|^2}{u(z)}\,dz\right)^{1/2} \left(\int_{\re^{2n+1}} \frac{|\Delta_{\hn} g(z)|^2}{u(z)}\,dz\right)^{1/2}.
\end{multline*}
An example of a weight in $A_\infty$ that satisfies \eqref{weightcond} is $u(z)=|z|_{\hn}^{-1},$   hence, the following holds:
\begin{multline*} \int_{\re^{2n+1}}\frac{|f(z)g(z)|}{|z|_{\hn}}\,dz \lesssim \left(\int_{\re^{2n+1}} |\Delta_{\hn} f(z)|^2|z|_{\hn}\,dz\right)^{1/2} \left(\int_{\re^{2n+1}} |g(z)|^2|z|_{\hn}\,dz\right)^{1/2} \\
+\left(\int_{\re^{2n+1}} |f(z)|^2|z|_{\hn}\,dz\right)^{1/2} \left(\int_{\re^{2n+1}} |\Delta_{\hn} g(z)|^2|z|_{\hn}\,dz\right)^{1/2}.
\end{multline*}

\section{Proofs of Theorem~\ref{high2wthm}, Theorem~\ref{sobolev}, and Theorem~\ref{sobolev2}}\label{secc:proofmain}

The proofs of Theorems~\ref{high2wthm}, \ref{sobolev}, and \ref{sobolev2}  follow  from Theorem~\ref{multifracint} and  Corollaries~\ref{repformulamulti}, \ref{globalrepformulamulti} and \ref{repformulamulti3}, respectively, which we state and prove below.

\subsection{Multilinear fractional integrals in  Carnot groups.} Given a Carnot group $\cg=(\rn, \diamond, \delta_\lambda)$ of homogeneous dimension $Q$ and a system of generators of $\cg,$ say ${\bf{X}}=\{X_1,\cdots,X_{n_1}\},$ we consider on $\rn$ the Carnot-Carath\'eodory metric $d$ associated to ${\bf{X}}.$ If  $B_d(x,r)$ is the $d$-ball of radius $r$ centered at $x,$ recall that  $\abs{B_d(x,r)}=c_d \,r^Q$ where $c_d=\abs{B_d(0,1)}$ and therefore
 $(\rn,d,\text{Lebesgue measure})$ is a space of homogeneous type. For $\vec{x}=(x_1,\cdots,x_m)$ and $\vec{y}=(y_1,\cdots,y_m)$ with $x_i,\,y_i\in\re^n,$ $i=1,\cdots,m,$ we define $d(\vec{x},\vec{y}):=d(x_1,y_1)+\cdots+d(x_m,y_m);$ in the case when $x_1=x_2=\cdots=x_m=:x,$ we simply write $d(x,\vec{y})$ instead of $d(\vec{x},\vec{y}.)$

 In the framework of Carnot groups, we define the multilinear fractional integral of order  $\tau>0$  by
\begin{equation}\label{potopdefinition}
\mathcal{I}_{\cg,\tau} (\vec{f})(x):= \int_{{\re}^{nm}}
\frac{\vec{f}(\vec{y})}{d(x,\vec{y})^{m\,Q-\tau}}\,d\vec{y},\qquad x\in\rn,
\end{equation}
where $\vec{f}=(f_1,\cdots,f_m),$ $f_i:\re^n\to\re,$ and  for $\vec{y}=(y_1,\cdots,y_m)\in \re^{nm},$  $\vec{f}(\vec{y})=\prod_{i=1}^m f_i(y_i).$ 

  \begin{theor}\label{multifracint}
Suppose $m \in \N,$ $\frac{1}{m}<p\le q<\infty$ and $1<p_1, \cdots,
 p_m <\infty$ such that
 $ \frac{1}{p}=\frac{1}{p_1}+\cdots+\frac{1}{p_m}.$
Let $\cg$ be a  Carnot group in $\rn$ of homogeneous dimension $Q,$ $\tau$ a positive real number,  $m$  a positive integer such that $\tau\le m \,Q$, $d$ the Carnot-Carath\'eodory metric in $\rn$ with respect to a family of generators ${\bf{X}}$ of $\cg,$ and 
  $u, \, v_i,$ $i=1,\cdots, m,$ 
weights defined on $\re^n$
and satisfying condition \eqref{2wq>1} if $q>1$ or condition
\eqref{2wq<1} if $q\le 1$ with $k$ replaced by $\tau.$
Then there exists a constant $C$  such that
\[
\left(\int_{\rn}\left(\abs{\mathcal{I}_{\cg,\tau}(\vec{f})(x)}u(x)\right)^q\,dx\right)^{1/q}\le
C \prod_{i=1}^m\left(\int_{\rn}(\abs{f_i(x)}v_i(x))^{p_i}\,dx\right)^{1/p_i}
\]
for all $\vec{f}=(f_1,\cdots,f_m)\in L^{p_1}(\rn,v_1^{p_1}dx)\times\cdots\times
L^{p_m}(\rn,v_m^{p_m}dx).$ The constant $C$ depends only on structural constants and the constants appearing in  \eqref{2wq>1} and \eqref{2wq<1}.
\end{theor}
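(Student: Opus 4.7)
The plan is to follow the dyadic-shell strategy for fractional integrals, adapted to the multilinear setting on the space of homogeneous type $(\rn, d, \text{Lebesgue})$, absorbing the weight conditions \eqref{2wq>1}--\eqref{2wq<1} through H\"older's inequality with $t$-bumps.

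First I decompose the integral defining $\mathcal{I}_{\cg,\tau}(\vec{f})(x)$ into dyadic shells of $d(x,\vec{y})=\sum_i d(x,y_i)$. Since $d(x,\vec{y})\sim \max_i d(x,y_i)$, on the shell $\{2^k<d(x,\vec{y})\le 2^{k+1}\}$ one has $\vec{y}\in B_d(x,2^{k+1})^m$ and $d(x,\vec{y})\sim 2^k$. Using $|B_d(x,r)|=c_d\, r^Q$ this gives the pointwise bound
\[
|\mathcal{I}_{\cg,\tau}(\vec{f})(x)|\le C\sum_{k\in\zz}(2^k)^{\tau}\prod_{i=1}^m \dashint_{B_d(x,\,2^{k+1})}|f_i(y_i)|\,dy_i.
\]
For each average, H\"older with exponents $(p_i,p_i')$ combined with Jensen's inequality yields the $t$-bumped estimate
\[
\dashint_{B}|f_i|\,dy_i\le \left(\dashint_{B}(|f_i|v_i)^{p_i}\,dy_i\right)^{1/p_i}\left(\dashint_{B}v_i^{-tp_i'}\,dy_i\right)^{1/tp_i'},
\]
introducing precisely the quantities appearing in the hypotheses.

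In the case $q\le 1$, I raise the pointwise bound to the $q$-th power using the $q$-subadditivity $(\sum a_k)^q\le \sum a_k^q$, multiply by $u^q(x)$, and integrate. At each scale $k$, the factor $(2^k)^{q\tau}\prod_i(\dashint_{B_d(x,2^{k+1})}v_i^{-tp_i'})^{q/tp_i'}$ paired with $u^q(x)$ is controlled through condition \eqref{2wq<1} after a covering/sparse-family argument on a dyadic grid in $(\rn,d)$; the residual factor $\prod_i (\dashint_{B_d(x,2^{k+1})}(|f_i|v_i)^{p_i})^{q/p_i}$ is dominated by a multilinear Hardy--Littlewood maximal function applied to $(|f_i|v_i)^{p_i}$, whose $L^{p_i/q}$-boundedness (valid since $p_i/q\ge p_i>1$) yields the product of $L^{p_i}$-norms after taking $q$-th roots. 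The case $q>1$ is handled by duality: test against $g\in L^{q'}$ with $\|g\|_{q'}=1$, interchange integrations, and run the same dyadic analysis, this time using the $L^{qt}$ bump on $u$ provided by \eqref{2wq>1}.

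The main obstacle is the passage from the discrete sum $\sum_k (2^k)^{q\tau}(\cdots)$ integrated against the measure $u^q\,dx$ to the target product of $L^{p_i}$-norms of $|f_i|v_i$. This is precisely the point at which the two-weight testing condition is invoked, and the standard way to make it effective is via a Carleson-type embedding or a principal-ball/sparse selection valid in any space of homogeneous type, combined with dyadic chaining across the scales $2^k$. The $t$-bumps are exactly what upgrade the two-weight testing to a strong $L^{p_1}\times\cdots\times L^{p_m}\to L^q$ bound, playing here the role of the bumped multilinear Muckenhoupt $A_{\vec{P},q}$ condition in the Euclidean work of P\'erez and Moen.
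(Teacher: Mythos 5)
The paper's own proof of this theorem is two lines long: since $(\rn,d,\text{Lebesgue measure})$ is a space of homogeneous type, it invokes \cite[Corollary 1]{MMN10} and merely verifies its two hypotheses, namely the reverse doubling of Lebesgue measure with respect to $d$-balls and a growth condition for the kernel $d(x,\vec{y})^{\tau-mQ}$, both of which are immediate from $\abs{B_d(x,r)}=c_d\,r^Q$ and $\tau\le mQ$. You instead set out to reprove the underlying two-weight multilinear fractional integral bound from scratch. That is a legitimate but far more ambitious route, and your opening moves --- the dyadic shell decomposition, the $t$-bumped H\"older/Jensen step, and the split into $q\le 1$ (via $q$-subadditivity) and $q>1$ (via duality) --- do match the standard strategy of \cite{SW92}, \cite{Moen09}, and \cite{MMN10}.

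The problem is that the decisive step is asserted rather than carried out. Passing from the multi-scale sum
$\sum_k 2^{k\tau q}\,u(x)^q\prod_i\bigl(\dashint_{B_k} v_i^{-tp_i'}\bigr)^{q/tp_i'}\prod_i\bigl(\dashint_{B_k}(\abs{f_i}v_i)^{p_i}\bigr)^{q/p_i}$, with $B_k=B_d(x,2^{k+1})$, to the product of $L^{p_i}$ norms is exactly the content of the cited corollary, and it is not a routine application of the multilinear maximal function: the H\"older application you propose, with exponents $p_i/q$, requires $\sum_i q/p_i\le 1$, i.e.\ $q\le p$, whereas the theorem allows $p<q$. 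Closing the argument in that range forces one to exploit the factor $r(B)^{Q(1/q-1/p)}$ appearing in \eqref{2wq>1} and \eqref{2wq<1} through a genuine Carleson/principal-ball selection over the scales together with a discrete H\"older inequality in $k$; note also that \eqref{2wq<1} averages $u^q$ over balls, so the pointwise factor $u(x)^q$ must first be converted to such averages. Until that step is written out --- or replaced, as in the paper, by a citation of the general result for spaces of homogeneous type --- the proof has a gap precisely where all of the work lies.
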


\begin{proof} Since $(\cg,d,\text{Lebesgue measure})$ is a space of homogeneous type  we just need to check that the hypothesis of \cite[Corollary 1]{MMN10}  are satisfied if $\tau\le m\,Q.$ The conditions to be checked are the reverse doubling property of Lebesgue measure with respect to $d$-balls and a growth condition for the kernel of the multilinear fractional integral.

The reverse doubling condition in this setting  means that there are positive  constants $c$ and $\delta$ such that
\[
\frac{\abs{B_d(x_1,r_1)}}{\abs{B_d(x_2,r_2)}}\ge c \left(\frac{r_1}{r_2}\right)^\delta,
\]
whenever $B_d(x_2,r_2)\subset B_d(x_1,r_1),$ $x_1,x_2\in \re^n,$ and $0<r_1,\,r_2<\infty$. Since $\abs{B_d(x,r)}=c_d\, r^Q$ the above inequality holds true with a uniform constant $c$ and any positive $\delta\le Q.$

The growth condition  for the kernel in this context means that for
every positive constant $C_1$
there exists a positive constant $C_2$ such that for all $\vec{x},\,\vec{y},\,\vec{z}\in \re^{nm},$
\begin{eqnarray*}
\frac{{d}(\vec{x}, \vec{y})^\tau}{\prod_{i=1}^m \abs{B_{{d}}(x_i, {d}(\vec{x},\vec{y}))}}&\le& C_2\, \frac{{d}(\vec{z}, \vec{y})^\tau}{\prod_{i=1}^m\abs{B_{d}(z_i, {d}(\vec{z},\vec{y}))}}, \quad {d}(\vec{z}, \vec{y})\le C_1 \,{d}(\vec{x},\vec{y})\\
\frac{{d}(\vec{x}, \vec{y})^\tau}{\prod_{i=1}^m\abs{B_{{d}}(x_i, {d}(\vec{x},\vec{y}))}}&\le& C_2\, \frac{{d}(\vec{y}, \vec{z})^\tau}{\prod_{i=1}^m\abs{B_{{d}}(y_i,{d}(\vec{y},\vec{z}))}}, \quad {d}(\vec{y}, \vec{z})\le C_1 \,{d}(\vec{x},\vec{y}).
\end{eqnarray*}
   Both inequalities follow from the facts that $\abs{B_d(x,r)}=c_d\, r^Q$ and $\tau\le m\,Q.$
\end{proof}

\subsection{Multilinear higher-order representation formulas in Carnot groups}\label{secc:highrepformulas}

In this subsection we prove multilinear higher-order representation formulas for a family of generators of a  Carnot group (Corollaries~\ref{repformulamulti}, \ref{globalrepformulamulti}, \ref{repformulamulti3}) as consequences of their linear counterparts (Theorems {\bf A}, {\bf B}, {\bf C}).

\begin{theorA}[{\cite[p.111, Corollary E]{LW00}}]\label{repformula}
Let $\cg$ be a  Carnot group in $\rn$ of homogeneous dimension $Q,$ $k$ a positive integer, $d$ the Carnot-Carath\'eodory metric in $\rn$ with respect to a family of generators ${\bf{X}}$ of $\cg,$  $B$ a $d$-ball, and $f\in C^k(B)$. Then there exists a polynomial $P_k(B,f)$ of degree less than $k$ such that for $x\in B,$
\begin{equation*}
\abs{f(x)- P_k(B,f)(x)} \le C\, \int_B\abs{{\bf{X}}^k f(y)}\frac{d(x,y)^k}{\abs{B_d(x,d(x,y))}}\,dy+C\,\frac{r(B)^k}{\abs{B}}\int_B\abs{{\bf{X}}^kf(y)}\,dy,
\end{equation*}
where $C$ is independent of $f,$ $x$ and $B.$ Moreover, if $k\le Q$ then
\begin{equation}\label{linealrepformula}
\abs{f(x)- P_k(B,f)(x)} \le C\, \int_B\abs{{\bf{X}}^k f(y)}\frac{d(x,y)^k}{\abs{B_d(x,d(x,y))}}\,dy.
\end{equation}
\end{theorA}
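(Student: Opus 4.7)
The plan is to derive the representation formula from a pointwise sub-Riemannian Taylor expansion and then kill the polynomial piece by averaging over $B$. Fix the $d$-ball $B$ of radius $r=r(B)$ and let $x,y\in B$. The first ingredient is the (Nagel-Stein-Wainger-type) fact that any two points of $B$ can be joined by a subunit curve $\gamma:[0,T]\to B^{*}$ staying in a fixed dilate of $B$, with $T\approx d(x,y)$ and $\gamma'(t)=\sum_{j=1}^{n_1}a_j(t)\,X_j(\gamma(t))$, $\sum_j a_j^{2}\le 1$. Applying the fundamental theorem of calculus $k$ times to $s\mapsto f(\gamma(s))$ along such a curve produces an identity of the form
$$f(x) \;=\; T_{k}^{y}f(x) \;+\; R_{k}(x,y),$$
where $T_{k}^{y}f(x)$ is a sub-Riemannian Taylor polynomial in $x$ of Carnot degree less than $k$ centered at $y$, expressed in terms of $\{{\bf X}^{\alpha}f(y):|\alpha|_{\cg}<k\}$, and $R_{k}(x,y)$ is an integral remainder consisting of $k$-fold iterated integrals of ${\bf X}^{k}f$ along $\gamma$.

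The polynomial $P_{k}(B,f)$ is then obtained by averaging the identity against a cutoff. Pick $\phi\in\coi$ with $\supp\phi\subset B$, $\int\phi=1$ and $\|\phi\|_{L^\infty}\lesssim |B|^{-1}$, and set
$$P_{k}(B,f)(x) \;:=\; \int_{B}\phi(y)\,T_{k}^{y}f(x)\,dy.$$
Since each $T_{k}^{y}f$ is a polynomial in $x$ of Carnot degree $<k$, so is $P_{k}(B,f)$. Subtracting,
$$f(x)-P_{k}(B,f)(x) \;=\; \int_{B}\phi(y)\,R_{k}(x,y)\,dy,$$
so the entire task reduces to controlling this double integral pointwise.

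The remainder estimate is the technical heart of the argument. Using Fubini to swap the $y$-average with the curve-parameter integral hidden inside $R_{k}$, and the fact that the connecting curves have length comparable to $d(x,y)$, the double integral is recast as an integral of $|{\bf X}^{k}f(z)|$ against a kernel whose size is controlled by $d(x,z)^{k}$ divided by the volume of the corresponding sub-Riemannian ball. The identity $|B_{d}(x,r)|=c_{d}\,r^{Q}$ then reproduces precisely the first term of the claim. When $k>Q$ an extra boundary-type contribution appears from terms that cannot be absorbed into a locally integrable tail; bounding these crude terms at the scale of $B$ and averaging against $\phi$ yields exactly the $r(B)^{k}|B|^{-1}\int_{B}|{\bf X}^{k}f|$ piece on the right-hand side. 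When $k\le Q$, the kernel $d(x,y)^{k}/|B_{d}(x,d(x,y))|\sim d(x,y)^{k-Q}$ is already locally integrable, and this boundary contribution is dominated by the first term and can be dropped, recovering \eqref{linealrepformula}.

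The main obstacle is the geometric step: passing from an average of one-dimensional integrals along a family of subunit curves to an integral against the kernel $d(x,z)^{k}/|B_{d}(x,d(x,z))|$. This requires a careful parametrization of the family of curves issuing from $x$ into $B$ whose Jacobian reproduces the volume of sub-Riemannian balls, together with uniform containment of the curves in a controlled dilate of $B$ so that ${\bf X}^{k}f$ is evaluated only on its domain of definition. This is exactly the sub-Riemannian machinery developed by Nagel-Stein-Wainger and refined by Lu-Wheeden in the citation, and it is the place where the specific structure of the homogeneous Carnot group $\cg$ genuinely enters the argument.
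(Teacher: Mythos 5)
First, note that the paper does not prove Theorem A at all: it is quoted verbatim from Lu--Wheeden \cite[p.111, Corollary E]{LW00} and used as a black box, so there is no internal proof to compare against. Judged against the actual argument in \cite{LW00}, your sketch takes a genuinely different route, and it contains two gaps that are not cosmetic.

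The first gap is the claim that iterating the fundamental theorem of calculus $k$ times along a subunit curve $\gamma$ joining $y$ to $x$ produces a decomposition $f(x)=T_k^{y}f(x)+R_k(x,y)$ in which $T_k^{y}f(x)$ is a polynomial in $x$ of homogeneous degree less than $k$. It does not: the ``polynomial part'' obtained this way has coefficients given by iterated integrals of the controls $a_j(t)$ along the particular curve chosen, and these are neither polynomial in the endpoint $x$ nor even canonically defined functions of $x$. The object you actually need is the Folland--Stein stratified Taylor polynomial, whose existence is a separate theorem and whose accompanying remainder bound is of the form $\abs{f(x)-T_k^{y}f(x)}\le C\,d(x,y)^k\sup\abs{{\bf{X}}^k f}$ over a quasi-ball --- a sup-norm estimate, not an integral representation. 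The second gap is the ``geometric step'': converting the $y$-average of $k$-fold iterated curve integrals into a single integral of $\abs{{\bf{X}}^k f(z)}$ against the kernel $d(x,z)^k/\abs{B_d(x,d(x,z))}$. The polar-coordinate/Jacobian argument you invoke is a first-order device; for $k\ge 2$ it does not apply as stated, and this is precisely why Lu and Wheeden argue differently: they iterate the first-order representation formula, applying it successively to $f$ and to its horizontal derivatives minus their own polynomial projections, using an $L^1(B)$-bounded projection onto $\poly_k$ that reproduces polynomials together with a composition estimate for the kernels $d(\cdot,\cdot)^{a}/\abs{B_d(\cdot,d(\cdot,\cdot))}$. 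The extra term $r(B)^k\abs{B}^{-1}\int_B\abs{{\bf{X}}^k f}$ arises exactly from the failure of that kernel composition to close up when the total order exceeds $Q$; your sketch attributes it to unspecified ``boundary-type contributions,'' which is not an argument. (Your final observation --- that for $k\le Q$ the averaged term is dominated by the integral term because $d(x,y)^{k-Q}\gtrsim r(B)^{k-Q}$ for $x,y\in B$ --- is correct and is indeed how \eqref{linealrepformula} follows from the first display.)
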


\begin{corollary}[Higher-order multilinear representation formula.] \label{repformulamulti}
Let $\cg$ be a  Carnot group in $\rn$ of homogeneous dimension $Q$ and $n_1$ generators, $k$ and $m$  positive integers such that $k\le m\, Q$, $d$ the Carnot-Carath\'eodory metric in $\rn$ with respect to a family of generators ${\bf{X}}$ of $\cg,$  $B$ a $d$-ball, and $\vec{f}=(f_1,\cdots,f_m)\in (C^k(B))^m.$ Then there exists a polynomial $P_k(B,\vec{f})$ of degree less than $k$ such that for $x\in B,$
\begin{equation}\label{repformulamulti2}
\abs{\prod_{i=1}^m f_i(x)- P_k(B, \vec{f})(x)}\le \,C \sum_{\substack{\alpha_i\in \N_0^{n_1} \\ \abs{\alpha_1}+\cdots+\abs{\alpha_m}=k}} \mathcal{I}_{\cg,k}(\abs{{\bf{X}}^{\alpha_1}f_1}\chi_B, \cdots,  \abs{{\bf{X}}^{\alpha_m}f_m}\chi_B)(x),
\end{equation}
where  $C$ is independent of $\vec{f},$ $x$ and $B.$
\end{corollary}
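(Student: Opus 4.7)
The plan is to deduce the corollary from Theorem~\textbf{A} by lifting to the product Carnot group $\cg^m:=\cg\times\cdots\times\cg$ ($m$ copies), which is itself a homogeneous Carnot group of homogeneous dimension $mQ$ with generators $\{X_j^{(i)}:1\le j\le n_1,\,1\le i\le m\}$ (each $X_j^{(i)}$ acts as $X_j$ on the $i$-th factor and trivially on the others), Carnot--Carath\'eodory metric $d_{\cg^m}(\vec x,\vec y)=\sum_{i=1}^m d(x^{(i)},y^{(i)})$, and ball volume $|B_{d_{\cg^m}}(\vec x,r)|\sim r^{mQ}$.

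Let $G(\vec y):=\prod_{i=1}^m f_i(y_i)$; after a standard $C^k$ extension of each $f_i$ past $\bar B$, $G$ is $C^k$ on a $d_{\cg^m}$-ball $\tilde B$ of radius $\sim m\,r(B)$ centered at the diagonal point $(x_0,\ldots,x_0)$ (with $x_0$ the centre of $B$), chosen so that $\tilde B$ contains the full diagonal $\{(x,\ldots,x):x\in B\}$ and the Cartesian product $B^m$. Applying Theorem~\textbf{A} in $\cg^m$ to $G$ on $\tilde B$, and absorbing the second term via $k\le mQ$, produces a polynomial $\widetilde P$ on $\cg^m$ of degree less than $k$ with
\[
\bigl|G(\vec x)-\widetilde P(\vec x)\bigr|\le C\int_{\tilde B}|{\bf{X}}^k G(\vec y)|\,d_{\cg^m}(\vec x,\vec y)^{k-mQ}\,d\vec y,\qquad \vec x\in\tilde B.
\]
Specialising to $\vec x=(x,\ldots,x)$ for $x\in B$ gives $G(\vec x)=\prod_i f_i(x)$ and $d_{\cg^m}(\vec x,\vec y)=d(x,\vec y)$; define $P_k(B,\vec f)(x):=\widetilde P(x,\ldots,x)$, which is a polynomial on $\cg$ of degree less than $k$ because the diagonal embedding intertwines the respective Carnot dilations. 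Because each $X_j^{(i)}$ acts only on the $i$-th factor, the iterated Leibniz rule gives $X^\alpha G(\vec y)=\prod_{i=1}^m X^{\alpha_i}f_i(y_i)$ for $\alpha=(\alpha_1,\ldots,\alpha_m)$ with $\alpha_i\in\N_0^{n_1}$, and thus
\[
|{\bf{X}}^k G(\vec y)|\le C\sum_{\substack{\alpha_i\in\N_0^{n_1}\\|\alpha_1|+\cdots+|\alpha_m|=k}}\prod_{i=1}^m|{\bf{X}}^{\alpha_i}f_i(y_i)|.
\]
Substituting and truncating by $\chi_B$ on each factor yields $\mathcal{I}_{\cg,k}(|{\bf{X}}^{\alpha_1}f_1|\chi_B,\ldots,|{\bf{X}}^{\alpha_m}f_m|\chi_B)(x)$ for each decomposition, establishing \eqref{repformulamulti2}.

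The main subtlety is the set-up of Theorem~\textbf{A} on $\cg^m$: the $C^k$ extension of each $f_i$ past $\bar B$ must have controlled norms, $\tilde B$ must simultaneously contain the full diagonal of $B$ and the Cartesian product $B^m$, and one must track the dilation weights to confirm that restriction of a degree-less-than-$k$ polynomial on $\cg^m$ to the diagonal produces a degree-less-than-$k$ polynomial on $\cg$. The seemingly natural alternative of applying Theorem~\textbf{A}$+$Leibniz directly to $F=\prod_i f_i$ on $\cg$ only produces an upper bound by a single-variable integral $\int_B\prod_i|{\bf{X}}^{\alpha_i}f_i(y)|\,d(x,y)^{k-Q}\,dy$, which is strictly weaker than (and in general incomparable to) the multilinear bound \eqref{repformulamulti2}---for spatially concentrated data the former is much larger---so the product-group lift is essential.
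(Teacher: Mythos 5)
Your overall strategy---lifting to the $m$-fold product group $\cg^{(m)}$ of homogeneous dimension $mQ$, applying Theorem~{\bf A} there to $G(\vec y)=\prod_{i=1}^m f_i(y_i)$, absorbing the second term using $k\le mQ$, restricting to the diagonal, and expanding $\tilde{{\bf{X}}}^k G$ by the Leibniz rule---is exactly the paper's proof. However, your set-up on the product group contains a genuine flaw that the paper avoids. You extend each $f_i$ past $\overline{B}$ and apply Theorem~{\bf A} on a larger ball $\tilde B$ chosen to contain $B^m$; the resulting integral then runs over all of $\tilde B$, where the derivatives of the extensions need not vanish, and your final step of ``truncating by $\chi_B$ on each factor'' is not justified: the contribution from $\tilde B\setminus B^m$ involves the extensions rather than the original data on $B$, and it is not controlled by the right-hand side of \eqref{repformulamulti2}. (Controlling it via a bounded extension operator would produce norms of the $f_i$, not the pointwise multilinear potential the corollary asserts.)

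The paper sidesteps this entirely by invoking the fact, from \cite[Lemma 1]{LW98a}, that $B_{\tilde d}(\vec x,r)=\prod_{i=1}^m B_d(x_i,r)$. Consequently $B^m$ is itself a $\tilde d$-ball (so $\tilde d$ is in fact the maximum, not the sum, of the coordinate distances, although the two are comparable, which is all the estimate needs), no extension is required, Theorem~{\bf A} applies directly to $G\in C^k(B^m)$, and every integral is already over $B^m$---which is precisely where the factors $\chi_B$ come from. With that one correction your argument coincides with the paper's; the remaining points you flag (that the diagonal restriction of a degree-less-than-$k$ polynomial on $\cg^{(m)}$ is a degree-less-than-$k$ polynomial on $\cg$, and the comparability $\tilde d(\vec x,\vec y)\sim d(x,\vec y)$ on the diagonal) are asserted in the paper in the same way.
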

\begin{proof}
Consider the Carnot group $\cg^{(m)}$ in $\re^{nm}$ given by the sum of $m$ copies of $\cg$ (see \cite[p. 190]{BLU07}) and note that $\cg^{(m)}$ has homogeneous dimension $m\,Q.$ Let
$\tilde{{\bf{X}}}$ be the family of generators for $\cg^{(m)}$ given by $m$ copies of ${\bf{X}}$ with appropriately added zeros and $\tilde{d}$ be the Carnot-Carath\'eodory metric in $\re^{nm}$ associated with $\tilde{{\bf{X}}}.$ Then $B_{\tilde{d}}(\vec{x},r)=\prod_{i=1}^m B_{d}(x_i,r)$ for $r>0$ and $\vec{x}=(x_1,\cdots,x_m)\in\cg^{(m)}$ (see \cite[Lemma 1]{LW98a}). Therefore $B^{m}$ is a $\tilde{d}$-ball and Theorem {\bf A} applied to $f(\vec{y})=\prod_{i=1}^m f_i(y_i),$ $\vec{y}=(y_1,\cdots,y_m),$ gives that there exists a polynomial $P_k(B^m, {f})(\vec{x})$ on $\cg^{(m)}$ of degree less than $k$ such that for all $\vec{x}=(x_1,\cdots,x_m)\in B^m$
\begin{align*}
\abs{\prod_{i=1}^m f_i(x_i)- P_k(B^m, {f})(\vec{x})}& \le C\, \int_{B^m}\abs{\tilde{{\bf{X}}}^k f(\vec{y})}\frac{\tilde{d}(\vec{x},\vec{y})^k}{\abs{B_{\tilde{d}}(\vec{x},\tilde{d}(\vec{x},\vec{y}))}}\,d\vec{y},\\
&=C\, \int_{B^m}\abs{\tilde{{\bf{X}}}^k f(\vec{y})}\frac{\tilde{d}(\vec{x},\vec{y})^k}{\prod_{i=1}^m\abs{B_d(x_i, \tilde{d}(\vec{x}, \vec{y}))}}\,d\vec{y},\\
\end{align*}
where $C$ is independent of $f,$ $\vec{x}$ and $B.$ Restricting to the diagonal, $x_1=x_2=\cdots=x_m=x\in B$,  setting $P_k(B,\vec{f})(x):=P_k(B^m,{f})(\vec{x}),$ which is a polynomial in $x$ of degree less than $k,$ and using that $\tilde{d}(\vec{x},\vec{y})\sim d(x,\vec{y})$ and  $\abs{B_d(x,r)}=c_d\, r^Q$ we obtain
\begin{align*}
&\abs{\prod_{i=1}^m f_i(x)- P_k(B, \vec{f})({x})} \le C\,  \int_{B^m}\frac{|\tilde{{\bf{X}}}^k f(\vec{y})|}{d(x, \vec{y})^{mQ-k}}\,d\vec{y}\\
& \le \, C \sum_{\substack{\alpha_i\in \N_0^{n_1} \\ \abs{\alpha_1}+\cdots+\abs{\alpha_m}=k}}\int_{B^m}\frac{\abs{{\bf{X}}^{\alpha_1}f_1(y_1)\cdots {\bf{X}}^{\alpha_m}f_m(y_m)}}{d(x, \vec{y})^{mQ-k}} d\vec{y}
\end{align*}
showing \eqref{repformulamulti2}.
\end{proof}

\bigskip

In \cite{LW04} it is proved that the polynomial in \eqref{linealrepformula} can be taken in such a way that  $\abs{B}\to \infty$ gives the following global representation formula.
\begin{theorB}[{\cite[p. 659, Theorem 3.1]{LW04}}]\label{repformulasglobal} 
Let $\cg$ be a  Carnot group in $\rn$ of homogeneous dimension $Q,$ $k$ a positive integer such that $k\le Q$,  and $d$ the Carnot-Carath\'eodory metric in $\rn$ with respect to a family of generators ${\bf{X}}$ of $\cg.$  Then there exists a constant $C$ such that
\begin{equation*}
\abs{f(x)} \le C\, \int_{\re^n}\abs{{\bf{X}}^k f(y)}\frac{d(x,y)^k}{\abs{B_d(x,d(x,y))}}\,dy, \qquad f\in C^k_c(\re^n),\,\,x\in\re^n.
\end{equation*}
\end{theorB}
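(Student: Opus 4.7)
The plan is to derive the global formula by applying the local representation formula of Theorem A on an increasing sequence of $d$-balls and showing that the accompanying polynomial terms vanish in the limit. Fix $f\in C^k_c(\re^n)$ and $x\in\re^n$. Choose a base point $x_0\in\re^n$ and for $R>0$ large enough set $B_R:=B_d(x_0,R)$, so that eventually $B_R\supset \supp(f)\cup\{x\}$. Applying Theorem A to $f$ on $B_R$ yields, for each such $R$, a polynomial $P_k(B_R,f)$ of homogeneous degree less than $k$ with
\begin{equation*}
\abs{f(x)-P_k(B_R,f)(x)}\le C\int_{B_R}\abs{{\bf X}^k f(y)}\,\frac{d(x,y)^k}{\abs{B_d(x,d(x,y))}}\,dy.
\end{equation*}
Since $\supp(f)\subset B_R$ for all large $R$, the right-hand side is independent of $R$ once $B_R$ covers $\supp(f)$, and equals the integral over $\re^n$.

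The central task is to arrange that $P_k(B_R,f)(x)\to 0$ as $R\to\infty$. Following the construction in \cite{LW00,LW04}, $P_k(B_R,f)$ can be chosen as the unique polynomial of $\cg$-degree less than $k$ such that the moments
\begin{equation*}
\int_{B_R}\bigl(f(y)-P_k(B_R,f)(y)\bigr)\,y^{\alpha}\,dy=0,\qquad \abs{\alpha}_\cg<k,
\end{equation*}
vanish (or an analogous condition with respect to a basis of polynomials adapted to $\cg$). With this normalization, the coefficients of $P_k(B_R,f)$ are finite linear combinations of expressions of the form $\tfrac{1}{|B_R|}\int_{B_R} f(y)\,Q_\beta(y)\,dy$, where $Q_\beta$ are polynomials of $\cg$-degree at most $k-1$. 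Because $f$ is compactly supported and these integrands are bounded uniformly in $R$, each such average is bounded by $C_f\,\abs{B_R}^{-1}=C_f\,c_d^{-1}R^{-Q}$. Consequently, writing $P_k(B_R,f)(x)=\sum_{\abs{\alpha}_\cg<k}c_\alpha(R)\,x^\alpha$, each coefficient $c_\alpha(R)$ decays at least like $R^{-Q}$ times a factor depending only on $R^{\abs{\alpha}_\cg}$ (from the natural anisotropic rescaling of the ball). Since $\abs{\alpha}_\cg<k\le Q$, the net decay is strictly negative in $R$ and $P_k(B_R,f)(x)\to 0$. This is exactly where the hypothesis $k\le Q$ is used.

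Combining the vanishing of $P_k(B_R,f)(x)$ with monotone convergence in $R$ on the right-hand side yields the global inequality
\begin{equation*}
\abs{f(x)}\le C\int_{\re^n}\abs{{\bf X}^k f(y)}\,\frac{d(x,y)^k}{\abs{B_d(x,d(x,y))}}\,dy,
\end{equation*}
as desired. The main obstacle is the careful choice of $P_k(B_R,f)$ so that its dependence on $R$ is quantitatively controlled: the generic polynomial produced by Theorem A need not decay as $R\to\infty$, and one must invoke the moment-matching construction of \cite{LW00,LW04} to gain a factor $|B_R|^{-1}$ in every coefficient. Once this scaling is in hand, the condition $k\le Q$ closes the argument in a straightforward manner, since the homogeneous degree of every nonconstant term of $P_k$ is strictly dominated by the gain $R^{-Q}$ coming from the averaging.
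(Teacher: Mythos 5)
Your proposal is essentially the argument the paper itself points to: the paper offers no proof of Theorem B, citing \cite[Theorem 3.1]{LW04} and noting only that the polynomial in Theorem A can be chosen so that letting $\abs{B}\to\infty$ makes it disappear, which is precisely your limiting, moment-matching scheme. One small correction: the hypothesis $k\le Q$ is really needed to invoke the single-integral form \eqref{linealrepformula} of Theorem A (otherwise the extra term $C\,r(B_R)^k\abs{B_R}^{-1}\int_{B_R}\abs{{\bf{X}}^k f}\sim R^{k-Q}$ does not stay bounded as $R\to\infty$), whereas the decay of the moment-matched polynomial's coefficients at a fixed point holds for any $k$.
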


Reasoning as in the proof of Corollary~\ref{repformulamulti} we obtain
\begin{corollary}[Higher-order global multilinear representation formula.] \label{globalrepformulamulti}
Let $\cg$ be a  Carnot group in $\rn$ of homogeneous dimension $Q$ and $n_1$ generators, $k$ and $m$  positive integers such that $k\le m\, Q$,  and $d$ the Carnot-Carath\'eodory metric in $\rn$ with respect to a family of generators ${\bf{X}}$ of $\cg.$  There exists a constant $C$ such that
\begin{equation}\label{repformulamulti2}
\abs{\prod_{i=1}^m f_i(x)}\le \,C \sum_{\substack{\alpha_i\in \N_0^{n_1} \\ \abs{\alpha_1}+\cdots+\abs{\alpha_m}=k}} \mathcal{I}_{\cg,k}(\abs{{\bf{X}}^{\alpha_1}f_1}, \cdots,  \abs{{\bf{X}}^{\alpha_m}f_m})(x),\quad f_i\in C_c^k(\re^n), \,\, x\in \re^n.
\end{equation}
\end{corollary}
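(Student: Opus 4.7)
The plan is to mirror the proof of Corollary~\ref{repformulamulti} verbatim, with Theorem~{\bf B} playing the role of Theorem~{\bf A}. First, I would form the product Carnot group $\cg^{(m)}$ on $\re^{nm}$, whose homogeneous dimension equals $mQ$ and whose Jacobian generators $\tilde{{\bf{X}}}$ consist of $m$ copies of ${\bf{X}}$ (one in each block of $n$ coordinates, padded by zeros in the other blocks); let $\tilde{d}$ denote the associated Carnot-Carath\'eodory metric on $\re^{nm}$. Since each $f_i\in C_c^k(\re^n)$, the tensor product $f(\vec{y}):=\prod_{i=1}^m f_i(y_i)$ lies in $C_c^k(\re^{nm})$, and the hypothesis $k\le mQ$ is exactly what Theorem~{\bf B} requires for the group $\cg^{(m)}$.

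Applying Theorem~{\bf B} to $f$ on $\cg^{(m)}$ yields, for every $\vec{x}\in\re^{nm}$,
\begin{equation*}
\abs{f(\vec{x})}\le C\int_{\re^{nm}}\abs{\tilde{{\bf{X}}}^k f(\vec{y})}\,\frac{\tilde{d}(\vec{x},\vec{y})^k}{\abs{B_{\tilde{d}}(\vec{x},\tilde{d}(\vec{x},\vec{y}))}}\,d\vec{y}.
\end{equation*}
Because the generators in the $i$-th block of $\tilde{{\bf{X}}}$ differentiate only in the $y_i$-variables and commute with those from every other block, a multinomial/Leibniz expansion produces the pointwise bound
\begin{equation*}
\abs{\tilde{{\bf{X}}}^k f(\vec{y})}\le C\sum_{\substack{\alpha_i\in \N_0^{n_1}\\ \abs{\alpha_1}+\cdots+\abs{\alpha_m}=k}}\prod_{i=1}^m \abs{{\bf{X}}^{\alpha_i}f_i(y_i)}.
\end{equation*}

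Next, I would restrict to the diagonal $\vec{x}=(x,x,\ldots,x)$, invoke the product structure $B_{\tilde{d}}(\vec{x},r)=\prod_{i=1}^m B_d(x_i,r)$ from \cite[Lemma~1]{LW98a} so that $\abs{B_{\tilde{d}}(\vec{x},r)}=c_d^m r^{mQ}$, and use the equivalence $\tilde{d}(\vec{x},\vec{y})\sim d(x,\vec{y})$ (which is actually an equality on the diagonal by the definition of $d(x,\vec{y})$ given in Section~2). These reductions collapse the denominator to $d(x,\vec{y})^{mQ-k}$, matching exactly the kernel of $\mathcal{I}_{\cg,k}$ in \eqref{potopdefinition}; summing over the allowed multi-indices then yields the claimed inequality.

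The only step that is not an immediate repetition of the proof of Corollary~\ref{repformulamulti} is the Leibniz/multinomial expansion of $\tilde{{\bf{X}}}^k f$, and even this presents no real obstacle: the $m$ blocks of $\tilde{{\bf{X}}}$ act on disjoint groups of variables, so expanding $(\tilde X_{1,1}+\cdots+\tilde X_{m,n_1})^k$ against the tensor product $\prod_i f_i(y_i)$ reduces to the usual multinomial theorem, with all cross-terms vanishing. Absence of a polynomial correction term, as compared to Corollary~\ref{repformulamulti}, is precisely the content of Theorem~{\bf B}, so the proof passes through without any further modification.
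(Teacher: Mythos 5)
Your proposal is correct and follows exactly the route the paper intends: the paper's proof of this corollary is literally the statement ``Reasoning as in the proof of Corollary~\ref{repformulamulti} we obtain,'' i.e., apply Theorem~{\bf B} to the tensor product $f(\vec{y})=\prod_i f_i(y_i)$ on the product group $\cg^{(m)}$ (noting $k\le mQ$ is the hypothesis of Theorem~{\bf B} there), expand $\tilde{{\bf{X}}}^k f$, and restrict to the diagonal using $B_{\tilde{d}}(\vec{x},r)=\prod_i B_d(x_i,r)$ and $\tilde{d}(\vec{x},\vec{y})\sim d(x,\vec{y})$. Your added detail on the Leibniz/multinomial expansion is consistent with what the paper leaves implicit.
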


\bigskip

The following representation formula is well known (see, for instance, \cite[p.236]{BLU07}).

\begin{theorC} \label{repformula2} Let $\mathcal{L}$ be  a sub-Laplacian on a  Carnot group $\cg$ in $\re^n$ of  homogeneous dimension strictly larger than 2. If $\Gamma$ is the fundamental solution for $\mathcal{L}$ then
\[\phi(x)=-\int_{\re^n}\Gamma(x^{-1}\diamond y)\,\mathcal{L}\phi(y)\,dy, \qquad \phi\in\mathcal{C}^\infty_0(\re^n),\,x\in\re^n.\]
\end{theorC}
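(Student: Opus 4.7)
The plan is to reduce the identity to the defining distributional equation $\mathcal{L}\Gamma=-\delta_0$ for the fundamental solution, by exploiting the left-invariance of $\mathcal{L}$ together with an integration-by-parts argument.

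First I would make the change of variable $z=x^{-1}\diamond y$. Since Lebesgue measure is bi-invariant under the group law of a homogeneous Carnot group ($\cg$ is nilpotent, hence unimodular), the right-hand side rewrites as
\[
-\int_{\rn}\Gamma(z)\,(\mathcal{L}\phi)(x\diamond z)\,dz.
\]
Because each generator $X_i$ belongs to $\liea$, it is left-invariant, so $(X_i\phi)(x\diamond z)=(X_i)_z[\phi(x\diamond z)]$; iterating this identity gives $(\mathcal{L}\phi)(x\diamond z)=\mathcal{L}_z\phi_x(z)$, where $\phi_x(z):=\phi(x\diamond z)$. For fixed $x$ the map $z\mapsto x\diamond z$ is a diffeomorphism, so $\phi_x\in\mathcal{C}^\infty_0(\rn)$ and the right-hand side reduces to $-\int_{\rn}\Gamma(z)\,\mathcal{L}_z\phi_x(z)\,dz$.

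The next step is to transfer $\mathcal{L}$ from $\phi_x$ onto $\Gamma$ by integration by parts. The main obstacle is the singularity of $\Gamma$ at the identity: by homogeneity and the hypothesis $Q>2$ one has the bounds $\Gamma(z)\lesssim d(0,z)^{2-Q}$ and $|X_i\Gamma(z)|\lesssim d(0,z)^{1-Q}$. I would circumvent this by first integrating over $\rn\setminus B_d(0,\epsilon)$ and applying a Green-type identity twice, once for each factor in $\mathcal{L}=\sum_i X_i^2$, using that each $X_i$ is formally skew-adjoint with respect to Lebesgue measure (a consequence of unimodularity, since left-invariant fields on a unimodular group have vanishing divergence). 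The task then reduces to showing that the resulting boundary integrals over $\partial B_d(0,\epsilon)$ vanish as $\epsilon\to 0^+$; this follows from the homogeneity bounds above combined with the surface-measure scaling $|\partial B_d(0,\epsilon)|\sim\epsilon^{Q-1}$ and the smoothness of $\phi_x$.

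Passing to the limit, one obtains $-\int_{\rn}(\mathcal{L}\Gamma)(z)\phi_x(z)\,dz=\phi_x(0)=\phi(x)$ from the defining property of the fundamental solution. The heart of the argument is therefore the regularization step, which is where the interplay between the homogeneous structure of $\cg$ and the integrability of $\Gamma$ and its first-order ${\bf{X}}$-derivatives near the origin enters; the remaining manipulations are purely algebraic and dictated by the left-invariant structure of the group.
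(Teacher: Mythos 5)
The paper offers no proof of Theorem C; it is quoted as well known from \cite[p.~236]{BLU07}, so there is no internal argument to compare yours against. Your first reduction --- the change of variables $z=x^{-1}\diamond y$, unimodularity of $\cg$, and left-invariance of $\mathcal{L}$ to rewrite the right-hand side as $-\int_{\rn}\Gamma(z)\,\mathcal{L}_z\phi_x(z)\,dz$ with $\phi_x(z)=\phi(x\diamond z)$ --- is correct and is the standard route. Note, moreover, that if one adopts the definition of fundamental solution used in \cite{BLU07} (namely $\Gamma\in L^1_{\mathrm{loc}}$, vanishing at infinity, with $\int_{\rn}\Gamma\,\mathcal{L}\psi\,dy=-\psi(0)$ for all $\psi\in\mathcal{C}^\infty_0(\rn)$), the proof ends right there: apply the definition to $\psi=\phi_x$. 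The entire regularization step is then unnecessary.

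The regularization argument as you describe it, however, contains a genuine error. If instead $\Gamma$ is characterized pointwise ($\delta_\lambda$-homogeneous of degree $2-Q$, $\mathcal{L}\Gamma=0$ away from the origin, suitably normalized), then after excising $B_d(0,\epsilon)$ and integrating by parts twice, it is \emph{not} true that all boundary terms vanish. The first-round terms, of the form $\int_{\partial B_d(0,\epsilon)}\Gamma\,(X_i\phi_x)\,\langle X_i,\nu\rangle\,d\sigma$, are $O(\epsilon^{2-Q}\cdot\epsilon^{Q-1})=O(\epsilon)$ and do vanish; but the second-round terms $\int_{\partial B_d(0,\epsilon)}(X_i\Gamma)\,\phi_x\,\langle X_i,\nu\rangle\,d\sigma$ scale like $\epsilon^{1-Q}\cdot\epsilon^{Q-1}=O(1)$. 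These are exactly the terms that carry the delta mass: their sum converges to $c\,\phi_x(0)$ with $c=-1$ by the normalization of $\Gamma$, and this is where $\phi(x)$ comes from. Your conclusion as written is internally inconsistent: if every boundary term vanished, you would be left with $\int_{\rn}(\mathcal{L}\Gamma)\phi_x\,dz$ interpreted as a pointwise integral, which equals $0$ since $\mathcal{L}\Gamma=0$ off the origin, yielding $\phi(x)=0$; interpreting it distributionally instead just returns you to the quantity you started with. So either take the distributional definition and skip the surgery entirely, or keep the surgery and retain (and evaluate) the non-vanishing flux term.
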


\begin{remark}  The homogeneous dimension of $\cg$ being  strictly larger than 2 guaranties the existence of a fundamental solution for $\mathcal{L},$ which is unique. Also  $\Gamma(x)\sim d(x,0)^{2-Q},$ $x\ne 0, $ where $d$ is the Carnot-Carath\'eodory  metric in $\re^n$ associated with the family of generators corresponding to the sub-Laplacian $\mathcal{L}.$
\end{remark}

\begin{corollary}[Second-order global multilinear  representation formula with a sub-Laplacian.] \label{repformulamulti3}
Let $\cg$ be a  Carnot group in $\rn$ of homogeneous dimension $Q,$  $m$ a positive integer such that $m\, Q>2$, ${\bf{X}}$ a family of generators of $\cg$ and  $\mathcal{L}$ its sub-Laplacian. There exists a constant $C$ such that 
\begin{equation}\label{repformulamulti4}
\abs{\prod_{i=1}^m f_i(x)}\le C\, \sum_{i=1}^m \mathcal{I}_{\cg,2}(\abs{f_1}, \cdots, \abs{\mathcal{L} f_i}, \cdots, \abs{f_m})(x), 
\end{equation}
for all $x\in\re^n$ and $f_i\in \mathcal{C}^\infty_c(\re^n),$ $i=1,\cdots,m.$ 
\end{corollary}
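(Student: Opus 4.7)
The plan is to mimic the strategy behind Corollary~\ref{repformulamulti}, but apply Theorem~{\bf C} in place of Theorem~{\bf A} on the $m$-fold direct product Carnot group $\cg^{(m)}$. The crucial observation is that the sub-Laplacian on $\cg^{(m)}$ is a sum of squares of vector fields each of which differentiates only one of the $m$ blocks of coordinates, so no mixed second-order cross terms are produced when it acts on a tensor-product function — this is precisely what allows only a single $\mathcal{L}f_i$ to appear in each summand on the right-hand side of \eqref{repformulamulti4}.

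Concretely, I would consider the Carnot group $\cg^{(m)}$ in $\re^{nm}$, whose homogeneous dimension equals $mQ>2$. Let $\tilde{{\bf{X}}} = \{\tilde{X}_j^{(i)}\}_{1\le i\le m,\, 1\le j\le n_1}$ be the family of generators given by $m$ copies of ${\bf{X}}$ with appropriately added zeros, as in the proof of Corollary~\ref{repformulamulti}, and let $\tilde{\mathcal{L}} = \sum_{i=1}^{m}\sum_{j=1}^{n_1}(\tilde{X}_j^{(i)})^2$ be the associated sub-Laplacian on $\cg^{(m)}$. Since $mQ>2$, the fundamental solution $\tilde{\Gamma}$ of $\tilde{\mathcal{L}}$ exists, is unique, and satisfies $\tilde{\Gamma}(\vec{y}) \sim \tilde{d}(\vec{y},\vec{0})^{2-mQ}$, where $\tilde{d}$ is the Carnot-Carath\'eodory metric associated to $\tilde{{\bf{X}}}$.

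Now set $F(\vec{y}) := \prod_{i=1}^{m} f_i(y_i)$ for $\vec{y}=(y_1,\ldots,y_m) \in \re^{nm}$; since each $f_i \in \mathcal{C}^\infty_c(\re^n)$, one has $F \in \mathcal{C}^\infty_c(\re^{nm})$. Because $\tilde{X}_j^{(i)}$ involves only the $y_i$ variables, a direct computation yields
\begin{equation*}
\tilde{\mathcal{L}} F(\vec{y}) \;=\; \sum_{i=1}^{m} (\mathcal{L} f_i)(y_i) \prod_{l \ne i} f_l(y_l),
\end{equation*}
with no mixed terms such as $(X_j f_i)(X_j f_{i'})$ appearing, since each summand $(\tilde{X}_j^{(i)})^2$ touches only the $i$-th block. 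Applying Theorem~{\bf C} to $F$ on $\cg^{(m)}$ then gives
\begin{equation*}
F(\vec{x}) \;=\; -\int_{\re^{nm}} \tilde{\Gamma}(\vec{x}^{-1} \diamond \vec{y})\, \tilde{\mathcal{L}} F(\vec{y}) \, d\vec{y}.
\end{equation*}

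To finish, I would restrict to the diagonal $\vec{x}=(x,\ldots,x)$, for which $\tilde{d}(\vec{x},\vec{y}) = \sum_{i=1}^{m} d(x,y_i) = d(x,\vec{y})$. Taking absolute values and using the bound $|\tilde{\Gamma}(\vec{y})| \le C\, \tilde{d}(\vec{y},\vec{0})^{2-mQ}$ produces
\begin{equation*}
\left|\prod_{i=1}^{m} f_i(x)\right| \;\le\; C \sum_{i=1}^{m} \int_{\re^{nm}} \frac{|\mathcal{L} f_i(y_i)|\,\prod_{l \ne i} |f_l(y_l)|}{d(x,\vec{y})^{mQ-2}}\, d\vec{y} \;=\; C \sum_{i=1}^{m} \mathcal{I}_{\cg,2}(|f_1|,\ldots,|\mathcal{L} f_i|,\ldots,|f_m|)(x),
\end{equation*}
which is precisely \eqref{repformulamulti4}. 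The only real difficulty lies in recognizing the \emph{tensor decoupling} of $\tilde{\mathcal{L}}$ on $\cg^{(m)}$ that eliminates cross-derivative contributions; once this is seen, the argument runs parallel to the proof of Corollary~\ref{repformulamulti}, with Theorem~{\bf C} replacing Theorem~{\bf A} and the hypothesis $mQ>2$ (needed for the existence of $\tilde{\Gamma}$) replacing $k \le mQ$.
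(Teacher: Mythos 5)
Your proposal is correct and follows essentially the same route as the paper: pass to the product group $\cg^{(m)}$, apply the fundamental-solution representation (Theorem~{\bf C}) to the tensor product $F(\vec y)=\prod_i f_i(y_i)$, use that $\tilde{\mathcal L}F(\vec y)=\sum_i \mathcal{L}f_i(y_i)\prod_{l\ne i}f_l(y_l)$ together with $\tilde\Gamma(\vec x^{-1}\tilde\diamond\vec y)\sim d(\vec x,\vec y)^{2-mQ}$, and restrict to the diagonal. The only cosmetic difference is that you assert $\tilde d(\vec x,\vec y)=d(x,\vec y)$ exactly where the paper only needs (and states) the equivalence $\tilde d(\vec x,\vec y)\sim d(x,\vec y)$, which suffices.
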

\begin{proof}  Let  $d$ be the Carnot-Carath\'eodory metric in $\rn$ with respect to ${\bf{X}}$ and let $\cg^{(m)},$ $\tilde{{\bf{X}}}$ and $\tilde{d}$ be as in the proof of Corollary~\ref{repformulamulti}. If $\tilde{\Gamma}$ is the fundamental solution for the sub-Laplacian $\tilde{\mathcal{L}}$ corresponding to $\tilde{{\bf{X}}}$ then  Theorem {\bf B} gives
\[f_1(x_1)f_2(x_2)\cdots f_m(x_m)=-\int_{\re^{mn}}\tilde{\Gamma}((\vec{x})^{-1}\tilde{\diamond} \vec{y})\,\tilde{\mathcal{L}}(f_1\cdots f_m)(\vec{y})\,d\vec{y}, \]
for  $\vec{x}=(x_1,\cdots,x_m)\in\re^{mn},$  $\tilde{\diamond}$ the group operation in $\cg^{(m)}$ and $f_i\in \mathcal{C}^\infty_c(\re^n),$ $i=1,\cdots,m.$ 

We note that $\tilde{\Gamma}((\vec{x})^{-1}\diamond \vec{y})\sim  \tilde{d}(\vec{x},\vec{y})^{2-mQ}\sim d(\vec{x},\vec{y})^{2-mQ}$  and that $\tilde{\mathcal{L}}(f_1\cdots f_m)(\vec{y})=\sum_{i=1}^m \mathcal{L}f_i (y_i)\cdot \Pi_{j\not=i} f_j(y_j)$ for $\vec{y}=(y_1,\cdots,y_m)\in\re^{mn}.$ By these remarks and by taking $x:=x_1=x_2=\cdots=x_m$ in the above formula we obtain \eqref{repformulamulti4}.

\end{proof}

\section{An application to weighted Leibniz-type rules in Campanato-Morrey spaces}\label{secc:leib}

 Let $\cg$ be a  Carnot group in $\rr^n$, $d$ the Carnot-Carath\'eodory metric in $\rr^n$ with respect to a family of generators  ${\bf{X}}$ of $\cg$, $w \geq 0$ a weight and   $p, \lambda > 0$. A function  $f \in L^1_{loc}(\rn, w^p)$ is said to belong to the weighted
Morrey space $L^{p,\lambda}(w)$ if
$$
\norm{f}{L^{p,\lambda}(w)} = \sup_{B} \left(\frac{1}{|B|^{\lambda/Q}} \int_B |f(x) w(x)|^p\, dx \right)^{1/p} < \infty
$$
where the supremum is over all $d$-balls $B$.  Next, we define the weighted Campanato space of order $k$, $\mathcal{L}^{p,\lambda}_k(w)$.  Let $\poly_k$ be the collection of polynomials in $\cg$ of degree less than $k$.   We write $f\in \Lcm_k^{p,\lambda}(w)$ if
$$
\norm{f}{\mathcal{L}_k^{p,\lambda}(w)} = \sup_{B} \inf_{P \in \poly_k} \left( \frac{1}{|B|^{\lambda/Q}}\int_B \left(|f(x) - P(x)| w(x)\right)^p \, dx\right)^{1/p}  < \infty.
$$
As a  consequence of Theorem \ref{high2wthm}   we have the following.
\begin{theor} \label{LeibnizCampanato} Let $1<p_1, p_2<\infty$, $p$ be defined by
 $ \frac{1}{p}=\frac{1}{p_1}+\frac{1}{p_2},$ $q\geq p$ and $\lambda,\lambda_1,\lambda_2\in (0,\infty)$ be such that $\frac{\lambda}{q}=\frac{\lambda_1}{p_1}+\frac{\lambda_2}{p_2}$.
If $\cg$ is a  Carnot group in $\rn$ of homogeneous dimension $Q$ and $n_1$ generators, $k$ is a positive integer such that $k\le 2\, Q$, $d$ is the Carnot-Carath\'eodory metric in $\rn$ with respect to a family of generators ${\bf{X}}$ of $\cg$ and $u, \, v_1, \, v_2$ are weights on $\re^n$
satisfying condition \eqref{2wq>1} if $q>1$ or condition
\eqref{2wq<1} if $q\le 1$ with $m=2$, then
$$\|fg\|_{\Lcm_k^{q,\lambda}(u)}\leq \,C \sum_{\substack{\alpha_i\in \N_0^{n_1} \\ \abs{\alpha_1}+\abs{\alpha_2}=k}}
\left\| {{\bf{X}}^{\alpha_1}} f \right\|_{L^{p_1,\lambda_1}(v_1)}\|{\bf{X}}^{\alpha_2}g\|_{L^{p_2,\lambda_2}(v_2)}.$$

\end{theor}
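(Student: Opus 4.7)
The plan is simply to apply Theorem \ref{high2wthm} on each $d$-ball $B$ with $m=2$ and $\vec{f}=(f,g)$, and then convert the resulting integral estimate on $B$ into the Morrey norms on the right-hand side by invoking the defining inequality of $L^{p_i,\lambda_i}(v_i)$. The hypotheses placed here on $u,v_1,v_2$ and on $k\leq 2Q$ are exactly those required by Theorem \ref{high2wthm} with $m=2$, so the Poincar\'e estimate applies without any additional verification. Concretely, I would fix a $d$-ball $B$ and invoke Theorem \ref{high2wthm} to produce a polynomial $P_k(B,(f,g))$ of $\cg$-degree less than $k$ with
\begin{equation*}
\left(\int_B |fg-P_k(B,(f,g))|^q u^q\,dx\right)^{1/q}\leq C\sum_{\substack{\alpha_i\in\N_0^{n_1}\\ |\alpha_1|+|\alpha_2|=k}}\prod_{i=1}^{2}\left(\int_B|{\bf{X}}^{\alpha_i}f_i|^{p_i}v_i^{p_i}\,dx\right)^{1/p_i},
\end{equation*}
where $f_1=f$ and $f_2=g$. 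Since $P_k(B,(f,g))\in\poly_k$, after dividing through by $|B|^{\lambda/(nq)}$ the left-hand side dominates the ball-by-ball quantity $\inf_{P\in\poly_k}\left(\tfrac{1}{|B|^{\lambda/n}}\int_B|fg-P|^q u^q\,dx\right)^{1/q}$ whose supremum over $B$ defines $\|fg\|_{\Lcm_k^{q,\lambda}(u)}$.

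On the right-hand side, the definition of the weighted Morrey space gives $\left(\int_B|{\bf{X}}^{\alpha_i}f_i|^{p_i}v_i^{p_i}\,dx\right)^{1/p_i}\leq |B|^{\lambda_i/(np_i)}\|{\bf{X}}^{\alpha_i}f_i\|_{L^{p_i,\lambda_i}(v_i)}$ for $i=1,2$, so each product term is bounded by $|B|^{\lambda_1/(np_1)+\lambda_2/(np_2)}$ times the product of the two Morrey norms. The hypothesis $\lambda/q=\lambda_1/p_1+\lambda_2/p_2$ is exactly what makes this factor cancel the $|B|^{-\lambda/(nq)}$ coming from the Campanato normalization; since the resulting right-hand side is then independent of $B$, taking supremum over all $d$-balls finishes the proof. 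There is no real obstacle once Theorem \ref{high2wthm} is in hand --- the whole argument is scaling bookkeeping --- and the only point worth noting is that the polynomial furnished by Theorem \ref{high2wthm} belongs to $\poly_k$, which is immediate from its construction and legitimizes substituting it into the infimum defining the Campanato seminorm.
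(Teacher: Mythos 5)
Your proof is correct and is exactly the argument the paper intends: the paper gives no written proof, stating only that the theorem is ``a consequence of Theorem~\ref{high2wthm},'' and the intended route is precisely your ball-by-ball application of the bilinear Poincar\'e inequality followed by the Morrey-norm bound $\left(\int_B(|{\bf{X}}^{\alpha_i}f_i|v_i)^{p_i}\,dx\right)^{1/p_i}\le |B|^{\lambda_i/(np_i)}\|{\bf{X}}^{\alpha_i}f_i\|_{L^{p_i,\lambda_i}(v_i)}$ and the exponent cancellation from $\lambda/q=\lambda_1/p_1+\lambda_2/p_2$. The two points you flag --- that the constant in Theorem~\ref{high2wthm} is independent of $B$ and that $P_k(B,(f,g))\in\poly_k$ so it is admissible in the Campanato infimum --- are indeed the only things that need checking.
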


\end{document}